\documentclass[final]{siamltex}
\usepackage{amsmath} 	    % für erweiterte mathematische Konstrukte
\usepackage{amsfonts,amssymb}    	    % für erweiterte mathematische Schriften und Symbole

\usepackage{mathtools}

\usepackage{wrapfig}
\usepackage{todonotes}

\usepackage[top=3cm, bottom=3cm, left=3.5cm, right=3.5cm, heightrounded, marginparwidth=3.1cm, marginparsep=5mm]{geometry}                                          % spacing at boundary
\newcommand{\Qeps}{\cQ_{\eps}}

\def\eps{\varepsilon}

\def\GQ{\Gamma_{\!\cQ}}

\def\ra{\rangle}
\def\la{\langle}

\newcommand{\R}{\mathbb{R}}

\newcommand{\bn}{\mathbf{n}}
\newcommand{\bw}{\mathbf{w}}

\newcommand{\cQ}{\mathcal{Q}}
\newcommand{\cO}{\mathcal{O}}

\newcommand{\sign}{\text{sign}}

\newtheorem{remark}{Remark}[section]
\newtheorem{assumption}{Assumption}
\newtheorem{example}{Example}
\newtheorem{classification}{Classification}

\begin{document}

\title{Well-posedness of the heat equation in domains with topological transitions}\thanks{The author M.O. was supported in part by the U.S. National 	Science Foundation under award DMS-2408978.}

\author{Maxim A. Olshanskii\thanks{Department of Mathematics, University of Houston, Houston, Texas 77204-3008 (maolshanskiy@uh.edu).}
	\and Arnold Reusken\thanks{Institut f\"ur Geometrie und Praktische  Mathematik, RWTH-Aachen
		University, D-52056 Aachen, Germany (reusken@igpm.rwth-aachen.de).}
	}
\maketitle

\begin{abstract}
	We analyze a linear parabolic equation with homogeneous Dirichlet boundary conditions posed in domains whose evolution may involve topological transitions. The domains are described as sublevel sets of a smooth space–time level set function, allowing for transitions such as domain splitting and merging and the creation or vanishing of islands and holes.
	We introduce anisotropic space–time function spaces that extend the classical Bochner spaces used in cylindrical domains and establish key functional-analytic properties of these spaces, including the density of compactly supported smooth functions. This framework enables the application of the Babu\v{s}ka–Banach theorem, yielding existence, uniqueness, and \emph{a priori} estimates for weak solutions.
	The analysis applies to domain evolutions generated by level set functions with isolated nondegenerate critical points, which correspond to the generic topology changes classified by Morse theory in two and three spatial dimensions.
	\end{abstract}

\section{Introduction}\label{intro}

Phenomena and processes involving changes in topology are commonly observed in nature and engineering applications. Examples include drop breakage, membrane fusion, bubble coalescence, cell division, and phase transitions. Mathematical models of these and related phenomena may involve partial differential equations posed in domains undergoing topological transitions. In this paper, we address the question of well-posedness for a linear parabolic equation posed in a time-dependent domain that may change its topology in the course of evolution.

Well-posedness of parabolic equations in stationary domains is well understood~\cite{Evans,Wloka1987}. There is also a substantial body of literature devoted to parabolic problems in non-cylindrical space–time domains; see, for example,~\cite{lions1957,acquistapace1987unified,brown1997weak,savare1997parabolic,bonaccorsi2001variational,kuliev2007extension,kloeden2008pullback,Paronetto2013,alphonse2015abstract,kim2018existence}. Topics that are addressed include well-posedness under low regularity assumptions on the space-time domain or the (diffusion) coefficients in the equation (e.g., \cite{bonaccorsi2001variational}), particular non-standard boundary conditions (e.g., \cite{kim2018existence}), or an analysis of nonlinear (e.g., monotone) parabolic type problems, e.g., \cite{Paronetto2013}.   However, the well-posedness of PDEs posed in domains that undergo topological changes during their evolution appears to be essentially unexplored in the existing literature.

To address this apparent gap, we consider a linear parabolic PDE with homogeneous Dirichlet boundary conditions posed in a domain whose evolution can be described by a smooth level set function depending on space and time. If one restricts to the generic  case where the level set function has an isolated nondegenerated critical point (cf. Section~\ref{Sectdomains} for definitions) a complete classification of the possible topological changes is known from the literature. In spatial dimension two, for example, the only scenarios that can occur are creation/vanishing of an island, domain merging and splitting or the creation/vanishing of a hole.     In this paper we study all scenarios that can occur in spatial dimensions two and three.  For most of these scenarios, we prove that a natural weak formulation of the problem is well posed.

The approach taken in this paper extends the standard analysis for cylindrical domains, where a weak solution is defined in the Bochner space $L^2([0,T];H^1_0(\Omega))$ with a time derivative in $L^2([0,T];H^{-1}(\Omega))$. Corresponding extensions to non-cylindrical space--time domains defined by a smooth evolution of a spatial domain $\Omega(t)$ can be found in~\cite{lions1957,alphonse2015abstract,alphonse2023function}. Such a smooth evolution is typically defined through the existence of a smooth (in space and time) diffeomorphism $\Phi_t$ from a reference domain onto $\Omega(t)$. The existence of a smooth (or even continuous) family of diffeomorphisms $\Phi_t$ excludes any change in topology.

We introduce the spaces $H_0$ and $W$, which are analogues of the space $L^2(0,T;H_0^1(\Omega))$ and its subspace consisting of functions whose time derivative belongs to $L^2(0,T;H^{-1}(\Omega))$. These are anisotropic function spaces defined on the space--time domain. 
Instead of relying on a global smooth transformation to a cylindrical space–time domain, we exploit geometric information near the topological singularity to analyze these spaces. We establish key functional-analytic properties of these spaces, including the density of compactly supported smooth functions, which is far from obvious in the presence of topological singularities. This enables us to apply the general Babu\v{s}ka--Banach framework to prove the existence and uniqueness of a weak solution, which satisfies the corresponding \emph{a priori} estimate and solves the original PDE in the sense of distributions. Moreover, any distributional solution to the original problem is  our weak solution.

We note that topological transitions---for example, the merging of two connected components of $\Omega(t)$---may occur in various ways, and the present analysis covers only scenarios described by smooth level set functions with isolated nondegenerated critical points; see Section~\ref{Sectdomains}. All these scenarios can be classified and our analysis covers all these, except that of the creation or vanishing of a hole in a 2D domain and a void inside a 3D domain.

The remainder of the paper is organized as follows. In Section~\ref{Sectdomains}, we recall the level set description of $\Omega(t)$ and make our assumptions precise. Using the Morse lemma, this section provides a classification of possible topological changes and establishes several auxiliary results. Section~\ref{s:model} introduces the model problem and defines the function spaces $H_0$ and $W$; properties of the space $H_0$ are also studied there. Section~\ref{s:spaceW} is devoted to an analysis of the solution space $W$ and contains the main technical results of the paper. Finally, Section~\ref{s:wellP} introduces the weak formulation and demonstrates its well-posedness.

For a domain $\omega \subset \mathbb{R}^n$, we use the notation $(\cdot,\cdot)_\omega$ and $\|\cdot\|_\omega$ for the $L^2(\omega)$ inner product and the corresponding norm, respectively.

\section{Domain evolution} \label{Sectdomains} 
For $t \in [t_0,T]$, consider a time-dependent bounded domain $\Omega(t) \subset \mathbb{R}^d$, where $d = 2,3$, with $\Gamma(t) = \partial\Omega(t)$.

 We assume that there exists a \emph{critical time} $t_c \in [t_0,T]$ such that the evolution of $\Omega(t)$ is smooth before and after $t_c$ in the following sense: 
Let $\mathcal{I}_{-} := (0,t_c)$, $\mathcal{I}_{+} := (t_c, T)$, we assume that
 there exist smooth bounded domains $\Omega_\pm^0 \subset \mathbb{R}^d$,  (each consisting of a finite number of connected components), and diffeomorphisms 
\begin{equation}\label{Diff} \Phi_\pm(t): \Omega_\pm^0 \to \Omega(t),\quad t \in \mathcal{I}_\pm,~~ \text{such that}~~\Phi_\pm \in C^\infty(\overline{\Omega_\pm^0} \times \mathcal{I}_\pm; \mathbb{R}^d).
\end{equation}
 
 Since continuous transformations preserve the topology, $t_c$ is the moment where topology of $\Omega(t)$ may change. There can be various  scenarios of topological transitions which fall into this general description  and analysis may heavily depend on further assumptions on the evolution of $\Omega(t)$  in the vicinity of the critical time.
 In this study we restrict to a class of  domains characterized as the subzero levels of a globally smooth level set function $\phi$:
\begin{equation} \label{subzero} 
	\forall ~t \in [t_0,T]:~~\Omega(t)=\{\, x \in \R^d ~|~ \phi(x,t) < 0\,\},\quad \Gamma(t)=\{\, x \in \R^d ~|~ \phi(x,t) = 0\,\},
\end{equation}
and
\begin{equation} \label{eq:smooth} 
	\phi \in C^\infty(\R^d \times [t_0,T]). 
\end{equation}
The analysis below can be extended to less smooth $\phi$. Here,  however, we do not focus on minimal regularity assumptions for the level set function $\phi$. 

For the space--time domain and its `spatial' boundary we use the notations: 
\begin{equation} \label{spatialb}
\cQ = \bigcup\limits_{t \in (t_0,T)} \Omega(t) \times \{t\}\subset \R^{d+1}, \quad  \GQ:= \bigcup\limits_{t \in (t_0,T)} \Gamma(t)\times \{t\}.
\end{equation}

If $|\nabla \phi(x,t)| \geq c_0 > 0$ for all $x \in \Gamma(t)$ and $t \in [t_0,T]$, then, by the implicit function theorem, $\Gamma(t)$ does not undergo topological changes.
To allow for topological changes, we assume an isolated \emph{critical point} $(x_c,t_c) \in \GQ$, i.e.,
\begin{equation} \label{critpoint}
	\nabla \phi(x_c,t_c)=0 \quad\text{and}\quad \nabla \phi(x,t)\neq 0 \quad \text{for all }~(x,t)\in \Gamma_\cQ \setminus (x_c,t_c). 
\end{equation}

\begin{remark}\rm For $\Omega(t)$ given by \eqref{subzero}, \eqref{eq:smooth}, and \eqref{critpoint}, the smooth diffeomorphisms from   \eqref{Diff} can be defined as the flow maps $\Phi_\pm(y,t)=x(t)$,  with $x:\mathcal{I}_{\pm}\to\R^d$ solving 
	   \begin{equation}\label{trajectories}
	   	 x_t=  V(x,t), \quad t\in \mathcal{I}_{\pm},\quad x(0)=y \in \Omega_{\pm}^0, \quad \text{and}~~
	   	  V=\frac{\mp\phi_t \nabla\phi}{|\nabla\phi|^2+\phi^2}.
	   \end{equation}
Here we take $\Omega_{-}^0=\Omega(t_0)$, $\Omega_{+}^0=\Omega(T)$.
Note that for $x\in\Gamma(t)$, $(x,t)\neq (x_c,t_c)$, the normal velocity of the domain boundary is  
\begin{equation}\label{Vg}
	V_\Gamma=-\frac{\partial \phi}{\partial t} \frac{1}{|\nabla \phi|}\bn,
\end{equation}
 where $\bn= \frac{\nabla \phi}{|\nabla \phi|}$  is the outward pointing normal vector. Therefore, at an isolated critical point the normal velocity is unbounded if $\phi_t(0,0)\neq0$.
\end{remark}
\smallskip

The critical point is called \emph{nondegenerate} if
\begin{equation} \label{critpoint2} 
	\det \big( \nabla^2 \phi(x_c,t_c)\big) \neq 0.
\end{equation}
In this case, certain topological transitions can be classified via Morse theory~\cite{milnor1963morse}. We make use of the following parameter-dependent version of the Morse lemma proved in \cite{Laurain2018}.

\smallskip \begin{lemma}\label{L:Morse} Consider a nondegenerate critical point $(x_c,t_c)$ on the zero level of $\phi$. Without loss of generality, assume $(x_c,t_c)=(0,0)$. Then there exists a neighborhood $\widehat X= X \times (-\delta,\delta)$ of $(0,0)$ in $\R^{d+1}$ and a map $\psi:\, \widehat X \to \R^d$ such that: $\psi(0,0)=0$, $\psi \in C^\infty(\widehat X)$, $\psi(\cdot, t)$ is a diffeomorphism from $X$ onto $\psi(X,t)$ for each $t \in (-\delta,\delta)$. Furthermore, the following normal form holds:
	\begin{equation} \label{normalform}
		\phi\big(\psi(x,t),t\big)= - \sum_{i=1}^q x_i^2 + \sum_{i=q+1}^d x_i^2 + v(t), 
	\end{equation}
	with $0 \leq q \leq d$ and $v: (-\delta,\delta) \to \R$  a smooth map such that $v(0)=0$, $v'(0)=\frac{\partial \phi}{\partial t}(0,0)$. 
\end{lemma}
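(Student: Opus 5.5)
The natural route is a parameter-dependent Morse lemma, obtained by applying the classical Morse lemma with parameters to the spatial Hessian. One caveat first. The hypothesis \eqref{critpoint2} concerns the full $(d+1)\times(d+1)$ space--time Hessian, whereas the normal form \eqref{normalform} needs nondegeneracy of the \emph{spatial} Hessian $\nabla_x^2\phi(0,0)$. I therefore read "nondegenerate" in the lemma as $\nabla\phi$ denoting the spatial gradient, so that $\nabla_x\phi(0,0)=0$ and $\det\nabla_x^2\phi(0,0)\neq0$; this is the setting of \cite{Laurain2018}. If that reading is wrong, the lemma requires exactly this extra assumption, since $\phi_t(0,0)$ need not vanish.

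\textbf{Step 1: a smooth family of critical points.} Consider the map $F(x,t)=\nabla_x\phi(x,t)$. Its Jacobian in $x$ at $(0,0)$ is $\nabla_x^2\phi(0,0)$, which is invertible. By the implicit function theorem there is a smooth curve $t\mapsto a(t)$ on $(-\delta,\delta)$ with $a(0)=0$ and $\nabla_x\phi(a(t),t)=0$. Shrinking $\delta$, the Hessian $\nabla_x^2\phi(a(t),t)$ stays nondegenerate with the constant index $q$, the number of negative eigenvalues. I then set $v(t):=\phi(a(t),t)$. It is smooth, satisfies $v(0)=0$, and by the chain rule
\[
v'(0)=\nabla_x\phi(0,0)\cdot a'(0)+\phi_t(0,0)=\phi_t(0,0).
\]

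\textbf{Step 2: reduction to a quadratic form.} Define $g(y,t)=\phi(a(t)+y,t)-v(t)$. Then $g(0,t)=0$ and $\nabla_y g(0,t)=0$. Applying Taylor's formula with integral remainder twice gives
\[
g(y,t)=y^T H(y,t)y,\qquad H(y,t)=\int_0^1(1-s)\nabla^2_x\phi(a(t)+sy,t)\,ds .
\]
Here $H$ is symmetric, depends smoothly on $(y,t)$, and $H(0,t)=\tfrac12\nabla_x^2\phi(a(t),t)$ is nondegenerate.

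\textbf{Step 3: parametric diagonalization.} I would run the standard inductive Morse argument uniformly in $t$. After a fixed linear change of variables chosen at $(0,0)$, we have $h_{11}(0,0)\neq0$. Complete the square via
\[
z_1=\sqrt{|h_{11}|}\Big(y_1+\sum_{j>1}\frac{h_{1j}}{h_{11}}y_j\Big),
\]
which is smooth near $(0,0)$. The Jacobian of $y\mapsto z$ at $y=0$ is invertible, so by the inverse function theorem, with constants uniform for $t$ near $0$ by smooth dependence on parameters, this is a local diffeomorphism. Repeating the step on the remaining variables produces coordinates $z=\Theta(y,t)$ with $g=\sum\pm z_i^2$. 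Because the signs are continuous in $t$, they are constant. Reordering the coordinates puts the $q$ minus signs first.

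\textbf{Step 4: assembly.} Set $\psi(x,t)=a(t)+\Theta(\cdot,t)^{-1}(x)$. Since $a$ and $\Theta$ are smooth in $(y,t)$ and the inverse of a smooth family of diffeomorphisms is smooth jointly, by the implicit function theorem applied to $\Theta(y,t)=x$, the map $\psi$ is $C^\infty$. We have $\psi(0,0)=0$, each $\psi(\cdot,t)$ is a diffeomorphism from a common neighborhood $X$, and \eqref{normalform} holds.

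\textbf{Main obstacle.} The hardest part is securing neighborhoods $X$ and $(-\delta,\delta)$ that are uniform in $t$ throughout the inductive completion of squares. Each step of the induction shrinks the domain, but there are only $d$ steps, so the shrinking is finite. Within each step, every quantity is a smooth function of $(y,t)$ near $(0,0)$, so a single application of the inverse function theorem to the map $(y,t)\mapsto(\Theta(y,t),t)$ gives the uniformity.
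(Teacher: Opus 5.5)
\textbf{Verdict.} Your proof is correct, but it is not the paper's route: the paper gives no proof of this lemma and simply imports it from \cite{Laurain2018}. What you supply is a self-contained proof along classical lines. You use the implicit function theorem to get the curve $a(t)$ of spatial critical points, Taylor's formula with integral remainder to write $g=y^TH(y,t)y$, the inductive completion of squares of \cite{milnor1963morse} with $t$ carried as a parameter, and finally the inverse function theorem for the augmented map $(y,t)\mapsto(\Theta(y,t),t)$. That last step yields a common neighborhood $X$ and joint smoothness of $\psi$.

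\textbf{What your route buys.} The citation buys brevity. Your argument buys transparency. It identifies $v(t)=\phi(a(t),t)$ as the critical value along the curve of critical points, so $v(0)=0$ and $v'(0)=\phi_t(0,0)$ follow directly from the chain rule. It also shows that only the spatial Hessian enters, which is exactly what is used later in \eqref{Inertia} and \eqref{transform2}. An alternative to the $d$-step induction is to solve $H(y,t)=R(y,t)^TH(0,0)R(y,t)$ for $R$ near the identity by the implicit function theorem, which produces the coordinate change in one stroke. Your finite induction, with finitely many shrinkings, is equally valid.

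\textbf{Your opening caveat is unnecessary.} Throughout the paper $\nabla$ denotes the spatial gradient. This is visible in \eqref{Vg}, in the $d\times d$ matrix $\nabla\psi(x_c,t_c)$ in the inertia computation, and in the explicit notation $\nabla_{(x,t)}$ used for the space--time gradient in Section~\ref{s:spaceW}. So \eqref{critpoint} and \eqref{critpoint2} already are precisely the conditions $\nabla_x\phi(0,0)=0$ and $\det\nabla_x^2\phi(0,0)\neq0$ that you use. A space--time reading of \eqref{critpoint} would in fact contradict \eqref{critpoint3}. No extra assumption is needed.
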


If $\frac{\partial \phi}{\partial t}(0,0)\neq0$, then due to the inverse function theorem 
there exists $0< \hat \delta \leq  \delta$ sufficiently small such that  for $t \in (-\hat \delta,\hat \delta)$ there is a smooth inverse map $v^{-1}$, i.e., $v^{-1}(v(t))=t$ for $t \in (-\hat \delta,\hat \delta)$. Using the  smooth diffeomorphism  
\begin{equation} \label{transform2}
\Psi(x,t) = (\psi(x,v^{-1}(t)),v^{-1}(t)) \in C^\infty(\widehat X\times (-\hat \delta,\hat \delta)),
\end{equation}
we obtain  new coordinates in which  the level set function takes the form
\begin{equation} \label{normalform2}
	\phi\circ\Psi= - \sum_{i=1}^q x_i^2 + \sum_{i=q+1}^d x_i^2 + \text{s}\,t,  \qquad  \text{s}=\sign \big(\tfrac{\partial \phi}{\partial t}(0,0)\big).
\end{equation}

\subsection{A classification result} \label{secclassification}
Further we are interested in critical points satisfying
\begin{equation} \label{critpoint3} 
	\frac{\partial \phi}{\partial t}(x_c,t_c) \neq 0. 
\end{equation}

Using $\psi(x_c,t_c)=x_c$ and $\nabla \phi(x_c,t_c)=0$ we compute
\[
\begin{split}
	\nabla^2\phi\big(\psi(x_c,t_c),t_c\big) &= [\nabla\psi(x_c,t_c)]^T\nabla^2\phi(x_c,t_c) \nabla\psi(x_c,t_c) + \sum_{i=1}^d\frac{\partial\phi}{\partial x_i}(x_c,t_c)\nabla^2\psi_i(x_c,t_c)\\
	&= 
	[\nabla\psi(x_c,t_c)]^T\nabla^2\phi(x_c,t_c) \nabla\psi(x_c,t_c).
\end{split}
\]
Since $\nabla\psi(x_c,t_c)\in \R^{d\times d}$ is non-singular, the Sylvester law of inertia and \eqref{normalform} give
\begin{equation}\label{Inertia}
	\text{Inertia}\big(\nabla^2\phi(x_c,t_c)\big) = \{d-q,q,0\}.
\end{equation}
Recalling that the evolving domains are characterized by $\phi < 0$, thanks to the observation in \eqref{Inertia} and the normal form \eqref{normalform}  we can classify nondegenerate critical points $(x_c,t_c)$ as follows.   Let the spectrum of $\nabla^2\phi(x_c,t_c)$ be
\[
\lambda_1\le\dots\le \lambda_d,\quad \lambda_i\in \sigma \big(\nabla^2\phi(x_c,t_c)\big). 
\]
For nondegenerate critical points all eigenvalues are non-zero and we obtain the following

\smallskip 

\begin{classification}[{Classification of nondegenerate critical points}]\rm \label{Class}
%\noindent\texttt{Classification of nondegenerate critical points}\\
\newline The case $d=2$:
\begin{itemize}
	\item[\textrm{2a.}] $0<\lambda_1\le\lambda_2$,   $\frac{\partial \phi}{\partial t}(x_c,t_c) > 0$:  {vanishing of an island}, \\[0.2ex]
	\hspace*{1.9cm} $\frac{\partial \phi}{\partial t}(x_c,t_c) < 0$:  {creation of an island}.\\[-1.5ex]
	\item[\textrm{2b.}] $\lambda_1<0<\lambda_2$,  $\frac{\partial \phi}{\partial t}(x_c,t_c) > 0$:   {domain splitting}, \\[0.2ex]
	\hspace*{1.9cm} $\frac{\partial \phi}{\partial t}(x_c,t_c) < 0$:  {domain merging}.\\[-1.5ex]
	\item[\textrm{2c.}] $\lambda_1\le\lambda_2<0$,  $\frac{\partial \phi}{\partial t}(x_c,t_c) > 0$:   {creation of a hole}, \\[0.2ex]
	\hspace*{1.9cm} $\frac{\partial \phi}{\partial t}(x_c,t_c) < 0$:  {vanishing of a hole}.\\[-1.5ex]
\end{itemize}
The case $d=3$:\smallskip
\begin{itemize}
	\item[\textrm{3a.}] $0<\lambda_1\le\lambda_2\le\lambda_3$,   $\frac{\partial \phi}{\partial t}(x_c,t_c) > 0$:  {vanishing of an island}, \\[0.2ex]
	\hspace*{2.8cm} $\frac{\partial \phi}{\partial t}(x_c,t_c) < 0$:  {creation of an island}.\\[-1.5ex]
	\item[\textrm{3b.}] $\lambda_1<0<\lambda_2\le\lambda_3$,  $\frac{\partial \phi}{\partial t}(x_c,t_c) > 0$:   {domain splitting}, \\[0.2ex]
	\hspace*{2.8cm} $\frac{\partial \phi}{\partial t}(x_c,t_c) < 0$:  {domain merging}.\\[-1.5ex]
	\item[\textrm{3c.}]  $\lambda_1\le\lambda_2<0<\lambda_3$,  $\frac{\partial \phi}{\partial t}(x_c,t_c) > 0$:  {creation of a hole through the domain}, \\[0.2ex]
	\hspace*{2.8cm} $\frac{\partial \phi}{\partial t}(x_c,t_c) < 0$:  {vanishing of a hole through the domain}.\\[-1.5ex]
	\item[\textrm{3d.}] $\lambda_1\le\lambda_2\le\lambda_3<0$,  $\frac{\partial \phi}{\partial t}(x_c,t_c) > 0$:  {creation of an interior void}, \\[0.2ex]
	\hspace*{2.8cm} $\frac{\partial \phi}{\partial t}(x_c,t_c) < 0$:  {vanishing of an interior void}.\\[-1.5ex]
\end{itemize}   
\end{classification}   
For $d=2$, a domain splitting scenario is illustrated in Figure~\ref{fig1}.
\smallskip

\begin{figure}[ht!] 
	\begin{center} 
		\includegraphics[width=0.45\textwidth]{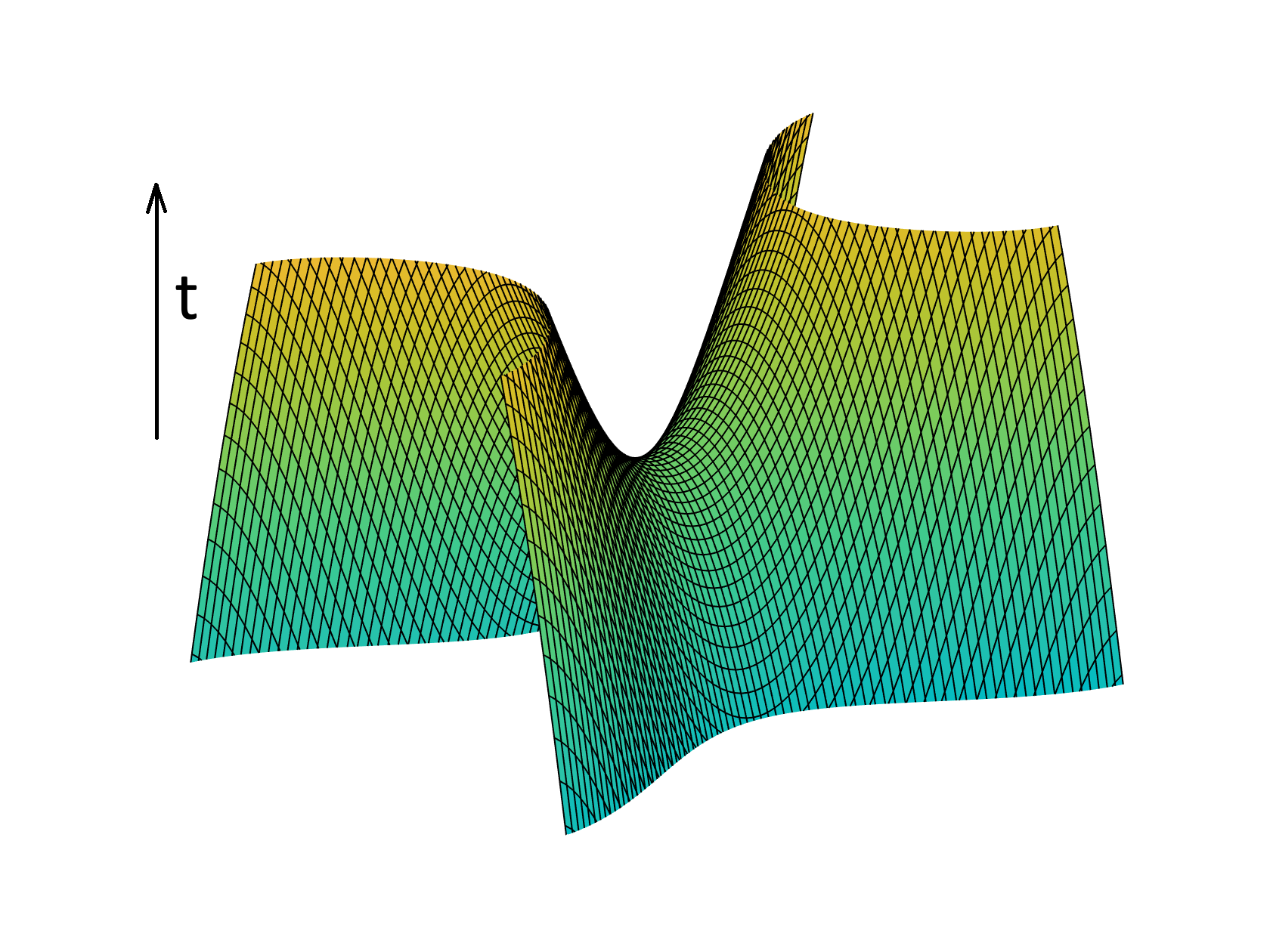} 
		\includegraphics[width=0.4\textwidth]{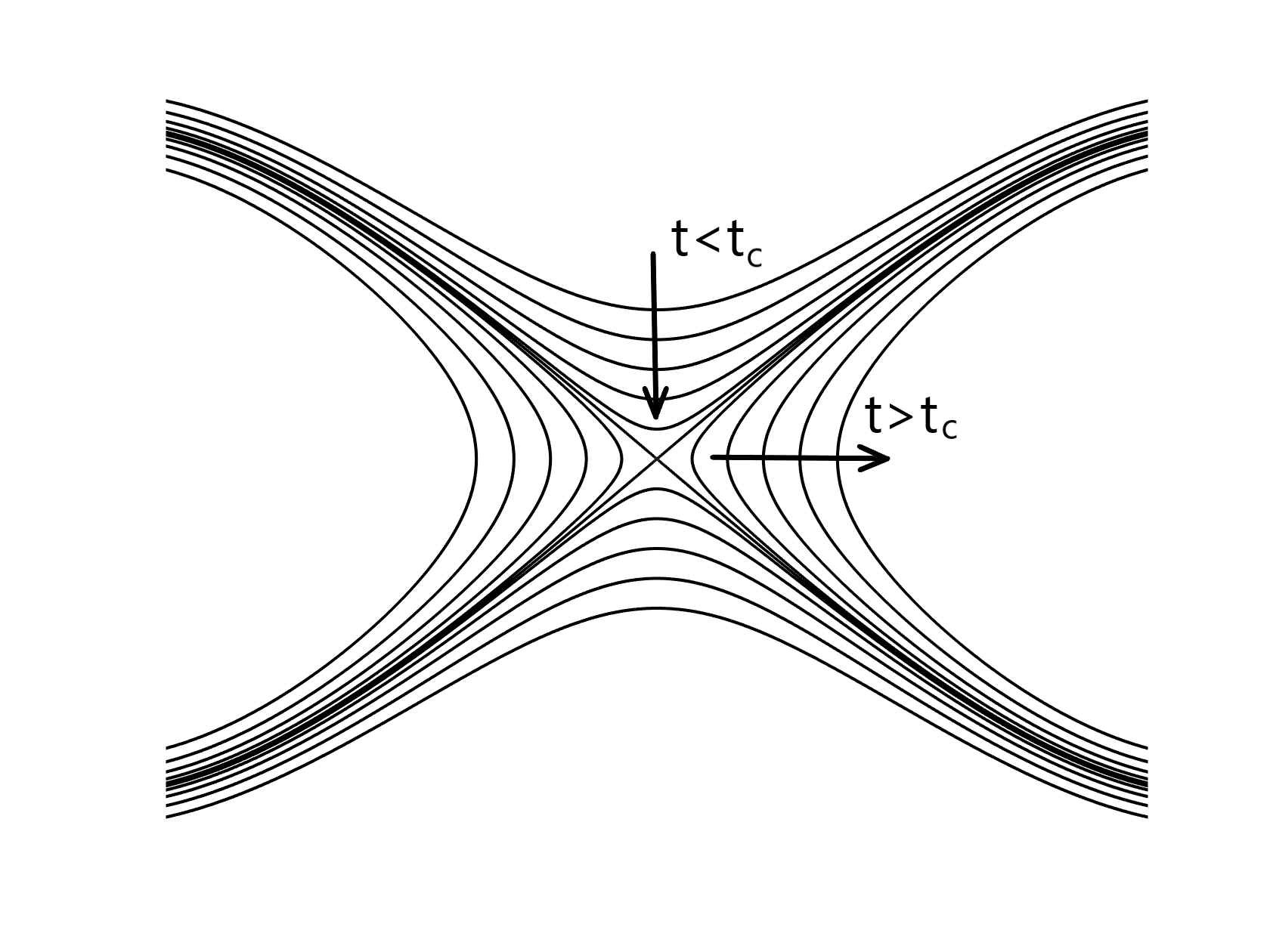} 
	\end{center} 
	\vskip-1.5em 
	\caption{$d=2$. A splitting scenario with $\lambda_1<0<\lambda_2$,  $\frac{\partial \phi}{\partial t}(x_c,t_c) > 0$. The left plot shows $\GQ$ near $(x_c,t_c)$, while the right shows snapshots of $\Gamma(t)$ near $(x_c,t_c)$.} \label{fig1} 
\end{figure}

Without loss of generality we shall  assume 
 \[ t_0=-1, \quad T=1, \quad  \text{and}~~ (x_c,t_c)=(0,0).
 \]
This will be our default assumption. The classification cases  2a) and 3a), however,  will be exceptions. For these cases  we shall assume 
 \begin{equation} \label{exception} 
 \begin{split}
 t_0=0, \quad T=1, \quad   (x_c,t_c)=(0,0)\qquad&\text{creation of an island},\\
 t_0=-1, \quad T=0, \quad  (x_c,t_c)=(0,0)\qquad&\text{vanishing of an island},
 \end{split}
\end{equation}
without specifying this each time explicitly.

\subsection{Uniform Poincar\'{e} and trace inequalities} 
For the time dependent domains $\Omega(t)$ given by the subzero levels of a smooth $\phi$ with an isolated nondegenerate critical point satisfying \eqref{critpoint3}, there are uniform in $t$ bounds on the best constant in the Poincar\'{e} and trace inequalities. This is made precise in this section.  For $t \in [-1,1] \setminus \{0\}$ the domain $\Omega(t)$ has a smooth boundary and the $H_0^1(\Omega(t))$ subspace of $H^1(\Omega(t))$ functions that have zero trace on $\partial \Omega(t)$ is defined in the usual way. 
\begin{lemma} \label{lemPoincare}
There is a constant $C$ such that for all $t \in [-1,1] \setminus \{0\}$ the following holds:
\begin{align}\label{UPoincare}
	\|u\|_{\Omega(t)}\ & \leq C \|\nabla u\|_{\Omega(t)}\quad \text{for all} ~~u\in H^1_0(\Omega(t)). 
\end{align}
\end{lemma}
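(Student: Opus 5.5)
The plan is to use the simplest domain-monotone argument. The Poincar\'e constant of a domain contained in a fixed slab is bounded in terms of the slab width alone. The first step is to observe that all $\Omega(t)$, $t\in[-1,1]$, lie in one fixed bounded set. Since $\phi$ is smooth on $\R^d\times[-1,1]$ and each $\Omega(t)$ is bounded, I would show that $\bigcup_{t}\Omega(t)$ is bounded. If the level set function is such that the union is not automatically bounded, I would get boundedness from the diffeomorphisms $\Phi_\pm$ in \eqref{Diff} together with the local normal form of Lemma~\ref{L:Morse} near $(0,0)$. Concretely, near $t=0$ the sets $\Omega(t)\cap \psi(X,t)$ stay in a fixed neighborhood. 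Away from the critical point, $\Gamma(t)$ moves with bounded normal velocity \eqref{Vg}, because $|\nabla\phi|\ge c>0$ on $\GQ$ outside a neighborhood of $(0,0)$ by compactness. It follows that $\overline{\GQ}$ is a compact subset of $\R^{d+1}$. Hence there is $R$ with $\Omega(t)\subset B_R:=\{|x|<R\}$, or better $\Omega(t)\subset S_R:=\{x:\,|x_1|<R\}$, for all $t\in[-1,1]\setminus\{0\}$.

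The second step is the extension by zero. For $u\in H^1_0(\Omega(t))$ with $t\neq0$, the boundary $\partial\Omega(t)$ is smooth, so the zero extension $\tilde u$ belongs to $H^1_0(S_R)$ (indeed $H^1_0(B_R)$) with $\|\tilde u\|=\|u\|_{\Omega(t)}$ and $\|\nabla\tilde u\|=\|\nabla u\|_{\Omega(t)}$. This holds for any open set, by density of $C_0^\infty(\Omega(t))$.

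The third step is the slab Poincar\'e inequality. For $w\in C_0^\infty(S_R)$ I write $w(x)=\int_{-R}^{x_1}\partial_1 w(s,x')\,ds$. Cauchy--Schwarz gives $|w(x)|^2\le 2R\int_{-R}^{R}|\partial_1w|^2ds$, and integrating over $x_1\in(-R,R)$ and then over $x'$ yields $\|w\|\le 2R\|\partial_1 w\|\le 2R\|\nabla w\|$. By density the same holds on $H^1_0$. Applying this to $\tilde u$ gives \eqref{UPoincare} with $C=2R$, independent of $t$.

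The only step needing care is the first one, the uniform boundedness of $\Omega(t)$. If boundedness of each $\Omega(t)$ were not automatically uniform, I would fall back on the compactness argument above. That argument combines the flow maps \eqref{trajectories}, whose velocity $V$ is bounded away from a neighborhood of the critical point, with the normal form \eqref{normalform2} inside that neighborhood. Everything else is standard and insensitive to the topological transition, since the Poincar\'e inequality only requires containment in a set of bounded width and no regularity of $\Omega(t)$.
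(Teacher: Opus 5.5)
Your proposal is correct and is essentially the paper's argument: both place all $\Omega(t)$ in one fixed bounded set (a cube $[-a,a]^d$ in the paper, a slab for you), extend $u$ by zero, and apply the Poincar\'e inequality on that fixed set to get a $t$-independent constant. The paper simply takes the uniform boundedness of the $\Omega(t)$ as evident, and you should do the same. Your sketched justification of it is circular: it uses compactness of $\GQ$ to obtain $|\nabla\phi|\ge c>0$ away from $(0,0)$, and then concludes that $\overline{\GQ}$ is compact. Moreover, smoothness of $\phi$ together with boundedness of each single $\Omega(t)$ would not by itself exclude a component escaping to infinity as $t\to 0$.
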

\begin{proof}
Recall that we assumed the domains $\Omega(t)$, $t \neq t_c$, to be smooth and bounded. It is clear that for all scenarios there is a constant $a$ such that $\Omega(t) \subset [-a,a]^d$ for all $t \in [-1,1]$. We now apply a standard argument for all regular times $t \neq 0$.  Take $u \in H^1_0(\Omega(t))$, $t \neq 0$, and extend it by zero outside $\Omega(t)$. The extended function $\bar u$ is from $H^1_0([-a,a]^d)$ and we have thanks to  Poincar\'{e} inequality in the cube:  
\[\|u\|_{\Omega(t)} = \|\bar u\|_{[-a,a]^d} \leq \sqrt{2} a \|\nabla \bar u\|_{[-a,a]^d}
=\sqrt{2} a \|\nabla u\|_{\Omega(t)}
\] 
which proves the result \eqref{UPoincare}.
\end{proof}
\smallskip

\begin{lemma} \label{uniformtrace}
For the cases 2a),3a) there is a constant $C$ such that the following trace estimate holds 
\begin{equation}\label{UtraceA}
	\|u\|_{\partial\Omega(t)}^2\le C \big(|t|^{-\frac12}  \|u\|_{\Omega(t)}^2+ |t|^{\frac12}
	\|\nabla u\|_{L^2(\Omega(t))}^2\big) \quad \text{for all}~~u \in C^1(\Omega(t))
\end{equation}
and for all $t \in  [-1,0)$ (vanishing island case) or  or all $t \in  (0,1]$  (creation of an island case).

For the other classification cases  there is a constant $C$ such that for all $t \in  [-1,1]\setminus\{0\}$ the following trace estimate holds
\begin{equation}\label{Utrace}
	\|u\|_{\partial\Omega(t)}\le C \|u\|_{H^1(\Omega(t))} \quad \text{for all}~~u \in C^1(\Omega(t)).
\end{equation}
\end{lemma}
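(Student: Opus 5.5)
We propose to prove both estimates with the standard divergence-theorem trace argument, built on a smooth vector field $\bw$ with $\bw\cdot\bn \ge c>0$ on $\partial\Omega(t)$. For any such field, integration by parts gives
\[
c\|u\|_{\partial\Omega(t)}^2 \le \int_{\partial\Omega(t)} u^2\,\bw\cdot\bn\,ds
= \int_{\Omega(t)} \big(u^2\,\mathrm{div}\,\bw + 2u\,\nabla u\cdot \bw\big)\,dx .
\]
The whole task is therefore to construct $\bw$ with controlled $\|\bw\|_\infty$ and $\|\mathrm{div}\,\bw\|_\infty$, uniformly in $t$ (or with an explicit $t$-dependence in the island case).

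\emph{Away from the critical point.} Fix a small ball $B_r$ around $(x_c,t_c)=(0,0)$ in space-time. Outside $B_r$, \eqref{critpoint} and compactness give $|\nabla\phi|\ge c_0>0$ on $\GQ$. Hence $\bw=\nabla\phi/|\nabla\phi|$, cut off to a neighbourhood of $\GQ$, is smooth with bounded divergence there, and $\bw\cdot\bn=1$. For $|t|\ge\delta$ this settles \eqref{Utrace}, and also \eqref{UtraceA}, since $|t|^{\pm1/2}$ is then bounded above and below. It remains to treat small $|t|$ near $x=0$, which we do by working in the Morse coordinates \eqref{normalform2}. The map $\Psi$ is a smooth diffeomorphism with Jacobian bounded above and below, so trace and $H^1$ norms change only by uniform constants. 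Near the origin we may therefore assume $\phi=-|x'|^2+|x''|^2+\mathrm{s}t$, where $x'=(x_1,\dots,x_q)$ and $x''=(x_{q+1},\dots,x_d)$. A partition of unity glues this local field to the global one.

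\emph{Non-island cases} ($1\le q\le d-1$, plus the hole cases if one wants them). We take the linear field $\bw(x)=(-x',x'')$ in the normal coordinates, and note that $\mathrm{div}\,\bw=d-2q$ is constant. On the boundary, $\nabla\phi=2(-x',x'')$, so
\[
\bw\cdot\bn=\frac{|x'|^2+|x''|^2}{|(x',x'')|}=|x| .
\]
This vanishes at the origin, so $\bw\cdot\bn$ is not bounded below, and that is the main obstacle. Two observations get around it. First, for $t\neq0$ the surface $\{|x''|^2-|x'|^2=-\mathrm{s}t\}$ avoids a neighbourhood of the origin of size about $|t|^{1/2}$. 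Second, we can exploit the structure instead of needing a uniform lower bound. In the 2D saddle case, for instance, the boundary consists of the graphs $x''=\pm\sqrt{|x'|^2-\mathrm{s}t}$ (or their analogues with the roles swapped), which are uniformly Lipschitz in $t$. Using the field $\bw=(0,x''/|x''|)$, or more simply the constant fields $\pm e_{d}$ on each graph sheet, the normal component is bounded below by $(1+L^2)^{-1/2}$, where $L$ bounds the graph slopes. The plan is to represent $\partial\Omega(t)\cap B_r$ as finitely many graphs over coordinate hyperplanes with slope bounds independent of $t$ (where the radial direction is steep, switch to graphs over $x'$), and then apply the classical trace inequality for Lipschitz graph domains with uniform constants. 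Checking this uniform-Lipschitz representation is the step I expect to be delicate, especially for the 3D case 3c and for the hole cases. In the latter the boundary near the origin is a small sphere $|x|^2=\mathrm{s}t$, and the inward constant fields fail. There we would instead use $\bw=-x/|x|$ cut off outside, although its divergence blows up like $1/|x|$. Since the hole cases are excluded from the paper's later analysis, a weaker argument there would be acceptable.

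\emph{Island case} (\eqref{UtraceA}). Here $q=0$, so $\Omega(t)$ near the origin is the small ball $|x|<\rho=\sqrt{|t|}$. We take $\bw=x/\rho$ there, giving $\bw\cdot\bn=1$, $\|\bw\|_\infty\le 1$ and $\mathrm{div}\,\bw=d/\rho$. The identity above then yields
\[
\|u\|_{\partial\Omega}^2\le C\big(\rho^{-1}\|u\|^2+\|u\|\,\|\nabla u\|\big).
\]
Young's inequality with weight $\rho$ gives $\|u\|\,\|\nabla u\|\le \tfrac12\big(\rho^{-1}\|u\|^2+\rho\|\nabla u\|^2\big)$, which is exactly \eqref{UtraceA} with $\rho=|t|^{1/2}$. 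The smooth distortion introduced by $\Psi$ only changes the constants.
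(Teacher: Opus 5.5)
Your far-field argument and your island argument are fine. The field $x/\rho$ with the weighted Young inequality is a hands-on version of the paper's scaling of the unit-ball trace inequality.

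\textbf{The gap.} It is in the saddle cases 2b), 3b), 3c), at exactly the step you flagged. A uniform slope bound on the boundary sheets is not what a uniform trace constant needs. What is needed is a transversal direction along which segments of a $t$-independent length $\alpha_0$, issued from the boundary piece, stay inside $\Omega(t)$.

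Coordinate directions do not give this in the neck state. Take $\phi=-x_1^2+x_2^2+t$ with $t<0$. The sheet $x_2=\sqrt{x_1^2+|t|}$ is a graph over $x_1$ with slope at most $1$. Yet the segment from $(0,\sqrt{|t|})$ in direction $-e_2$ leaves $\Omega(t)$ after length $2\sqrt{|t|}$. The one-dimensional estimate then carries a factor $|t|^{-1/2}$ in front of $\|u\|^2_{\Omega(t)}$.

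Relatedly, $\Omega(t)$ has no uniform Lipschitz character. Its limit $\{|x_2|<|x_1|\}$ is not Lipschitz at the vertex, and every coordinate cylinder of fixed size at the top of the neck contains both sheets. So a classical trace inequality for Lipschitz graph domains with $t$-independent constants is not available.

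Your vector-field variants fail for the same reason:
\begin{itemize}
\item In 2D, $\bw=(0,x''/|x''|)$ jumps across $\{x_2=0\}$, which lies inside the neck. The divergence theorem then produces an extra interior trace term $2\int_{\{x_2=0\}\cap\Omega(t)}u^2\,dx_1$, which is no easier to control.
\item In 3D the same field has divergence $1/|x''|$, which is positive and singular on the axis of the neck.
\item Constant fields $\pm e_d$ on the two sheets need a separating cutoff whose gradient is of order $|t|^{-1/2}$ in the neck.
\end{itemize}

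\textbf{The missing idea.} Use the null directions of the quadratic part of the normal form. On the first-quadrant piece in 2D take $w=\tfrac{1}{\sqrt2}(1,-1)$. There $w^T\nabla^2\phi\,w=0$ and $w\cdot\nabla\phi=-\sqrt2(x_1+x_2)\le 0$. Hence
$\phi(\xi+\alpha w)=\phi(\xi)+\alpha\, w\cdot\nabla\phi(\xi)\le 0$ for every $\alpha\ge 0$, for both signs of $t$. Segments of any fixed length therefore stay in $\overline{\Omega(t)}$, and $|w\cdot\bn|=\frac{x_1+x_2}{\sqrt2\,|x|}\ge\frac12$. The Evans-type one-dimensional estimate along these segments then gives a $t$-independent constant.

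Symmetry handles the other quadrants. In 3D, $w=\tfrac{1}{\sqrt2}(1,0,-1)$ for 3b) and $w=\tfrac{1}{\sqrt2}(0,1,-1)$ for 3c) do the same job on angular sectors. This is the paper's proof.

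\textbf{Hole cases.} In 2c) and 3d) your field $-x/|x|$ does work. Its divergence $-(d-1)/|x|$ is negative, so the term $\int u^2\,\mathrm{div}\,\bw$ can simply be dropped, and the blow-up you worried about is harmless.
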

\begin{proof} 
For $|t| \geq \delta$, with a given $\delta >0$, there is a diffeomorphism between $\Omega(t)$ and $\Omega(-1)$ or $\Omega(1)$.
For the domains $\Omega(-1)$ and $\Omega(1)$ the standard trace inequality holds, and thus we obtain an estimate as in \eqref{Utrace} with a constant uniform in $t$ provided $|t| \geq \delta$. For $|t| \leq \delta$ and $\delta$ sufficiently small, we can use the normal form of $\phi$ and classification~\ref{Class}.

\textbf{\small Cases 2a), 2c), 3a), 3d).} For the cases 2a) and 3a) we have a domain $\Omega(t)$ that is a ball with radius equal to $|t|^\frac12$. Now use the standard trace inequality for the unit ball in $\R^d$ and a scaling argument. This then yields the result \eqref{UtraceA}. The cases 2c) and 3d) are straightforward.
%\smallskip

\textbf{\small Cases 2b), 3b), 3c).}
For the analysis of these cases  we use a  different technique.
For $\eps >0$ we define $\Box_\eps:=\{\, x \in \R^d\,|\, |x_i|\leq \eps, ~ 1 \leq i \leq d\,\}$. For $|t|\leq \delta$ we will derive a uniform bound for $\|u\|_{\Gamma(t)\cap \Box_\eps}$ in terms of $\|u\|_{H^1(\Omega(t) \cap \Box_{\tilde \eps})}$ for suitable (sufficiently small) $\delta,\eps, \tilde \eps$. To this end, we consider a subdivision of the (local) boundary $\Gamma(t)\cap \Box_\eps$  into a finite number of boundary subdomains. Assume  $\Gamma_+(t)$ is one of such subdomains  and  assume a \emph{given} $w \in \R^d$, with $|w|=1$, such that  $(\xi,\alpha) \to \xi+\alpha w$, $\xi \in \Gamma_+(t)$, defines local coordinates with $\alpha \in [-\alpha_0,\alpha_0]$, $\alpha_0 >0$, independent of $t$. Furthermore, assume that the following two conditions are satisfied, where $n$ denotes the outward pointing unit normal on $\Gamma_+(t)$:
\begin{align} 
	\inf_{\Gamma_+(t)} |w^T n |  \geq c_0 >0 &\quad \text{for all}~t \in [-\delta,\delta], \label{conditie1} \\
	S_{\alpha_0}(t)  := \{ \xi +\alpha w\,|\, \xi \in \Gamma_+(t), ~\alpha \in [0,\alpha_0]\,\} \subset \Omega(t)\cap \Box_{\eps+\alpha_0}& \quad \text{for all}~t \in [-\delta,\delta], \label{conditie2}
\end{align}
with a constant $c_0$ in \eqref{conditie1} that is uniform in $t \in [-\delta,\delta]$. Using a standard approach (\cite[Section 5.5]{Evans}), one obtains 
\begin{equation} \label{K9} \begin{split}
		\int_{\Gamma_+(t)} u^2 \, ds  & \leq c_1 \int_{\Gamma_+(t)} \int_{0}^{\alpha_0} u^2+ (w \cdot \nabla u)^2  \ d\alpha\, d\xi  \\ &
		\leq c_2 \int_{S_{\alpha_0}} u^2 + |\nabla u|^2 \, dx \leq c_2 \|u\|_{H^1(\Omega(t)\cap \Box_{\eps+\alpha_0})}^2,
\end{split} \end{equation}
with constants $c_1$, $c_2$ independent of $t$. This yields the uniform trace estimate \eqref{Utrace} for $\Gamma_+(t)$. We apply this approach for the cases Cases 2b), 3b), 3c).

\textbf{\small Case 2b).} We consider this case with $\phi_t(0,0) >0$, i.e., the splitting scenario. The merging scenario is also covered by replacing $t$ with $-t$. We need to derive a uniform trace estimate for $\Gamma(t)=\{\, (x_1,x_2)\,|\, \phi(x,t):=-x_1^2 + x_2^2 + t =0\,\}$ in a neighborhood of the space-time origin. First consider  the part of $\Gamma(t)\cap \Box_\eps$ in the first quadrant, $\Gamma_+(t):=\{\, x \in \Gamma(t)\cap \Box_\eps\,|\, x_1 \geq 0, ~x_2 \geq 0\,\}$.
  By symmetry we then also have the same uniform trace estimate for the parts of $\Gamma(t)$ in the other quadrants. It remains to verify the conditions \eqref{conditie1}-\eqref{conditie2} for suitable $w$. Take $w:=\frac{1}{\sqrt{2}} (1, -1)^T$. Note that $n=|x|^{-1}(-x_1,x_2)^T$. %Furthermore, for $x \in \Gamma_+(t)$ we have $x_i \geq 0$.  Note that for $t \leq 0$ we have $x_1^2 = x_2^2 +t \leq x_2^2$, hence, $|x|\leq \sqrt{2}\, x_2$, and  for $t \geq 0$ we have $x_2^2 = x_1^2 -t \leq x_1^2$, hence, $|x|\leq \sqrt{2}\, x_1$, Using this we obtain, 
  For $x\in \Gamma_+(t)$ straightforward computations show:
\[
 |n(x)^T w| = \frac{1}{\sqrt{2}} \frac{x_1+x_2}{|x|}  \geq \frac12,
\]
and thus  \eqref{conditie1} is satisfied. One checks that on $\Gamma_+(t)$ we have $w \cdot \nabla \phi \leq 0$ and $w^T\nabla^2 \phi w=0$. Thus we obtain, for $\xi \in \Gamma_+(t)$:
\begin{equation} \label{Taylor}
  \phi(\xi+\alpha w)=\phi(\xi)+\alpha w \cdot \nabla \phi(\xi) + \tfrac{\alpha^2}2 w^T \nabla^2 \phi w \leq 0.
\end{equation}
Hence, $S_{\alpha_0}(t) \subset \Omega(t)$ for all $\alpha_0 > 0$. Finally note that for $x= \xi +\alpha w \in S_{\alpha_0}(t) $ we have $|x_i| \leq |\xi_i|+\alpha_0 |w_i| \leq \eps +\alpha_0$. Thus we also verified \eqref{conditie2}. \\
%For the case $t \geq 0$ we use the same approach and consider $\Gamma_+(t):=\{\, x \in \Gamma(t)\cap \Box_\eps\,|\, %x_1 \geq 0\,\}$ (by symmetry we then also have the result for the branch with $x_1 \leq 0$). We take $w=(1,0)^T$. On $\Gamma_+(t)$ we then have, using $x_2^2= x_1^2-t \leq x_1^2$, 
%\[ 
% |n(x)^T w| =  \frac{x_1}{|x|}\geq \frac{1}{\sqrt{2}},
%\]
%and thus \eqref{conditie1} holds. Furthermore, on $\Gamma_+(t)$ we have $w \cdot \nabla \phi \leq 0$ and $w^T\nabla^2 \phi w \leq 0$. As in \eqref{Taylor} we obtain $S_{\alpha_0}(t) \subset \Omega(t)$ for all $\alpha_0 > 0$, and with the same arguments as above we conclude that the uniform trace estimate \eqref{Utrace} holds. \\
%
%
\textbf{\small Case 3b).} It suffices to consider the splitting scenario with $\Gamma(t)= \{\, x\,|\, \phi(x):=-x_1^2+x_2^2 +x_3^2+t=0 \,\}$, since then the merging scenario is treated by replacing $t$ by $-t$.  Note that $n=|x|^{-1}(-x_1,x_2,x_3)^T$. Points $x \in \Gamma(t)$ can be parameterized by $(x_1,\psi) \to (x_1, r \cos \psi, r \sin \psi)^T=x(x_1,\psi)$, with $r:=\sqrt{x_1^2-t}$, $x_1^2 \geq t$, $\psi \in [0,2\pi)$.  We take $\Gamma_+(t):=\{\, x(x_1,\psi)\in \Gamma(t)\cap \Box_\eps\,|\,   \tfrac{\pi}{4} \leq  \psi \leq \tfrac{3\pi}{4}~\text{and}~x_1 \geq 0\,\}$. The result for other parts of $\Gamma(t)\cap \Box_\eps$ are then obtained by a symmetry argument (note that $\Gamma(t)$ is a surface of revolution around the $x_1$-axis). We take $w:=\frac{1}{\sqrt{2}}(1,0,-1)^T$. On $\Gamma_+(t)$ we have $x_1 \geq 0$ and $x_3=r \sin \psi \geq \tfrac12  \sqrt{2}\, r$ and we obtain
\begin{equation} \label{A44}
  |n(x)^T w| =\frac{1}{\sqrt{2}} \frac{x_1+x_3}{|x|} \geq \frac{1}{\sqrt{2}}\frac{x_1+\frac12 \sqrt{2}\, r}{\sqrt{x_1^2+r^2}}\geq \frac12\frac{x_1+r}{\sqrt{x_1^2+r^2}} \geq \frac12,
\end{equation}
and thus \eqref{conditie1} holds. Furthermore, on $\Gamma_+(t)$ we have $w \cdot \nabla \phi \leq 0$ and $w^T\nabla^2 \phi w = 0$. One can apply the same arguments as in case 2b) to verify that \eqref{conditie2} is satisfied and hence the uniform trace estimate \eqref{Utrace} holds. 
%For the case $t \geq 0$ we  take $\Gamma_+(t):=\{\, x \in \Gamma(t)\cap \Box_\eps\,|\, x_1 \geq 0\,\}$ (by symmetry we then also have the result for the branch with $x_1 \leq 0$). We take $w=(1,0,0)^T$. With the same arguments as used in case 2b) for $t\geq 0$ one concludes that the conditions \eqref{conditie1}-\eqref{conditie2} are satisfied. 
\\
\textbf{\small Case 3c).} It suffices to consider $\Gamma(t)= \{\, x\,|\, \phi(x):=-x_1^2-x_2^2 +x_3^2+t=0 \,\}$. Note that $n=|x|^{-1}(-x_1,-x_2,x_3)^T$. Points $x \in \Gamma(t)$ can be parameterized by $(x_3,\psi) \to (r \cos \psi, r \sin \psi, x_3)^T=x(x_3,\psi)$, with $r:=\sqrt{x_3^2+t}$, $x_3^2 \geq -t$, $\psi \in [0,2\pi)$.  We take $\Gamma_+(t):=\{\, x(x_3,\psi)\in \Gamma(t)\cap \Box_\eps\,|\,   \tfrac{\pi}{4} \leq  \psi \leq \tfrac{3\pi}{4}~\text{and}~x_3 \geq 0\,\}$. The result for other parts of $\Gamma(t)\cap \Box_\eps$ are then obtained by a symmetry argument (note that $\Gamma(t)$ is a surface of revolution around the $x_3$-axis). We take $w:=\frac{1}{\sqrt{2}}(0,1,-1)^T$. On $\Gamma_+(t)$ we have $x_3 \geq 0$ and $x_2=r \sin \psi \geq \tfrac12  \sqrt{2}\, r$ and with arguments as in \eqref{A44} we obtain $|n(x)^T w| \geq \tfrac12$ for $x \in \Gamma_+(t)$ ad thus \eqref{conditie1} holds. Due to $w \cdot \nabla \phi \leq 0$ and $w^T\nabla^2 \phi w = 0$ on $\Gamma_+(t)$ the same arguments as above apply again.
\end{proof}

\section{Model problem and space-time function spaces}\label{s:model}
We assume  that an evolving domain $\Omega(t)$, $t \in [-1,1]$, as specified in Section~\ref{Sectdomains} is given and consider the  heat equation as a model problem:
\begin{equation} \label{transport}
%		\frac{\partial u}{\partial t} + \div(u\bw) - \alpha \Delta u&=0\quad\text{on}~~\Omega(t), ~~t\in (-1,1],
				\frac{\partial u}{\partial t}  - \Delta u=f\quad\text{on}~~\Omega(t), ~~t\in (-1,1],
\end{equation}
with initial condition $u(x,-1)=u_0(x)$ for $x \in \Omega(-1)$ (except for  the classification case of an island creation).  We assume   homogeneous Dirichlet's boundary condition
\begin{equation} \label{Dirichlet}
		u = 0 \quad \text{on}~~\partial \Omega(t),~~t\in (-1,1].
\end{equation}
%or Neumann's boundary conditions on $\partial \Omega(t)$,
%\begin{equation} \label{Neumann}
%	\nabla u \cdot \bn = 0 \quad \text{on}~~\partial \Omega(t),~~t\in (-1,1].
%\end{equation}

A sufficiently regular solution to \eqref{transport}-- \eqref{Dirichlet} %or \eqref{transport}, \eqref{Neumann}
 satisfies the integral identity 
\begin{equation}\label{integral}
\left(u_t, v\right)_\cQ + \left(\nabla u, \nabla v\right)_\cQ =\left(f, v\right)_\cQ
\end{equation}
for any sufficiently smooth $v$ vanishing on $\Gamma_\cQ$.   The integral equality suggests functions from $L^2(\cQ)$ with space gradients in $L^2(\cQ)$ to form an appropriate test space and to consider  solutions $u$ from the same space, for which  time derivative $u_t$ exists in a suitable weak sense.

We introduce such spaces below  and derive  relevant properties. In section~\ref{s:wellP} we use these spaces to show well-posedness of a more general parabolic problem.

\subsection{Function spaces}
%In this section we introduce natural function spaces that are suitable for a \emph{space-time} weak variational formulation of the heat equation \eqref{transport}, cf. also \eqref{integral}. 

For the space--time domain $\cQ \subset \R^{d+1}$ let $C_c^\infty(\cQ)$ be the space of infinitely differentiable functions  with compact support in $\cQ$.
Consider a space consisting of functions from $L^2(\cQ)$ for which  the weak first partial derivatives with respect to the space variables exist in $L^2(\cQ)$:
\begin{equation} \label{defH}
   H:= \left\{ u \in L^2(\cQ)~|~\partial_i u\in L^2(\cQ), ~1 \leq i \leq d \,\right\}.
\end{equation}
%where $\partial_iu$ is the weak partial derivative. %, i.e., $\int_{\cQ} u \frac{\partial \phi}{\partial x_i}\, dq = - \int_{\cQ} \partial_i u \, \phi \, dq$ for all $\phi \in C_c^\infty(\cQ)$. 
Endowed with the scalar product
\[
  (u,v)_H= (u,v)_{\cQ}+ (\nabla u, \nabla v)_{\cQ}, \quad u,v\in H,
\]
this is a Hilbert space. In the literature we did not find a result showing that functions from this anisotropic Sobolev space can be approximated by functions that are smooth up to the boundary of $\cQ$. The next theorem states that the expected density result indeed holds. 
\smallskip

\begin{theorem} \label{densityH}
 The space $C^\infty(\overline \cQ)$ is dense in $H$.
\end{theorem}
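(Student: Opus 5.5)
The plan is to adapt the classical proof of density of $C^\infty(\overline\Omega)$ in $H^1(\Omega)$ for domains with the segment property: localize with a partition of unity, translate each local piece slightly into $\cQ$, and then mollify in space--time. Two facts make this work for the anisotropic space $H$. First, multiplication by a smooth cutoff $\chi$ maps $H$ into $H$, since $\partial_i(\chi u)=\chi\partial_i u+u\,\partial_i\chi$. Second, translations and space--time mollification commute with the spatial weak derivatives $\partial_i$, and they are continuous in $L^2$. Hence all the standard estimates carry over with $\|\cdot\|_H$ in place of the full $H^1(\cQ)$ norm.

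\textbf{Key geometric step.} I will verify that $\cQ$ satisfies a uniform segment condition in $\R^{d+1}$: every $z\in\partial\cQ$ has a neighborhood $U_z$ and a vector $\bw_z\in\R^{d+1}$ such that $y+\tau\bw_z\in\cQ$ for all $y\in\overline\cQ\cap U_z$ and $\tau\in(0,1)$. I treat three kinds of boundary points.
\begin{itemize}
\item Points of $\GQ$ away from $(0,0)$ and away from $t=\pm1$. Here $\nabla_x\phi\neq0$, so the space--time gradient $(\nabla\phi,\phi_t)$ is nonzero. Any $\bw_z$ with $\bw_z\cdot(\nabla\phi,\phi_t)<0$ works, by continuity of the gradient and a Taylor expansion as in \eqref{Taylor}.
\item The critical point $(0,0)$. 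The full space--time gradient there equals $(0,\phi_t(0,0))\neq0$ by \eqref{critpoint3}. So $\GQ$ is a smooth hypersurface in $\R^{d+1}$ even at the topological singularity, and $\bw_z=(0,-\text{s})$, a pure time shift, decreases $\phi$ near the origin. In the normal form \eqref{normalform2} this is immediate: $\phi\circ\Psi=-\sum_{i\le q}x_i^2+\sum_{i>q}x_i^2+\text{s}\,t$ is strictly monotone in $t$.
\item The bottom and top faces $t=t_0$, $t=T$. Here one takes $\bw_z=(0,\pm1)$. At corner points where $\Gamma(t_0)$ or $\Gamma(T)$ meets the face, take $\bw_z$ equal to the sum of the inward spatial normal and the inward time direction. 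Transversality holds there because $\nabla_x\phi\ne0$ on $\Gamma(t_0)$ and $\Gamma(T)$.
\end{itemize}
The island cases \eqref{exception} need a separate check, since there the critical point lies on a time face. Locally $\cQ=\{|x|^2<t,\ 0<t<\delta\}$ in normal coordinates, up to the diffeomorphism $\Psi$, and the upward shift $\bw=(0,1)$ maps this paraboloid into itself. So the segment condition still holds; the vanishing case follows by reversing time.

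\textbf{Approximation.} Cover $\overline\cQ$ by finitely many such $U_{z_j}$ together with one open set $U_0\Subset\cQ$, and choose a subordinate smooth partition of unity $\{\chi_j\}$. For $u\in H$ set $u_j=\chi_j u\in H$. Extend $u_j$ by zero outside its support and define
\[
u_{j,\tau}(y)=u_j(y+\tau\bw_j),\qquad \tau>0 .
\]
By the segment condition, $u_{j,\tau}$ is defined in a full neighborhood of $\overline\cQ\cap\operatorname{supp}\chi_j$, and its spatial weak derivatives there are the translated ones. Continuity of translation in $L^2$ gives $u_{j,\tau}\to u_j$ in $H$ as $\tau\to0$. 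Then mollify with $\rho_\eta$, where $\eta\ll\tau$ so that the mollified function is determined by values inside the region where $u_{j,\tau}\in H$. This gives $\rho_\eta*u_{j,\tau}\in C^\infty(\overline\cQ)$ with $\partial_i(\rho_\eta*u_{j,\tau})=\rho_\eta*\partial_i u_{j,\tau}$, and hence convergence in $H$. Summing over $j$ yields the density claim of Theorem~\ref{densityH}.

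\textbf{Main obstacle.} The hardest step is making the segment condition uniform near the critical point. Its neighborhood must be chosen small enough that the normal form of Lemma~\ref{L:Morse}, made time-monotone via \eqref{transform2}, is valid. One must also check that the translated region stays inside the single chart given by the diffeomorphism $\Psi$. I would do this directly in the coordinates of \eqref{normalform2}, then pull back, using that $\Psi$ is a diffeomorphism whose differential at the origin preserves the time direction up to a nonzero factor.
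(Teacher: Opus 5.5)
Your route is the paper's (Appendix~A): Adams' proof of density for domains with the segment condition, via a partition of unity, inward translation and mollification. For $H$ the only new observation needed is that translations and mollifiers commute with the spatial derivatives $\partial_i$. The paper simply asserts that $\cQ$ satisfies the segment condition, so your attempt to verify it is a useful addition.

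\textbf{Corner points: the chosen direction can fail.} At a corner $z=(x_0,t_0)$ with $x_0\in\Gamma(t_0)$, your vector $w=(-\bn,1)$ (inward spatial normal plus inward time direction) satisfies
\[
w\cdot\nabla_{(x,t)}\phi(z)=\phi_t(z)-|\nabla\phi(z)|.
\]
This is positive whenever $\Gamma$ moves inward at $t_0$ with normal speed $|V_\Gamma|>1$, cf.~\eqref{Vg}, and then $z+\tau w\notin\cQ$ for small $\tau>0$. It happens inside the paper's own class. Take the vanishing island $\phi=|x|^2+5t$ on $[-1,0]$ and $|x_0|=\sqrt5$; then $\phi(z+\tau w)=(5-2\sqrt5)\tau+\tau^2>0$. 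The same failure occurs at $t=T$ when $\Gamma(T)$ moves outward with speed larger than $1$.

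The repair is immediate. Take $w=(-\lambda\bn,\pm1)$ with $\lambda$ larger than $|\phi_t|/|\nabla\phi|$ near $z$. Alternatively, take any $w$ with $w\cdot\nabla_{(x,t)}\phi(z)<0$ and time component of the correct sign. This open cone is nonempty precisely because $\nabla\phi(z)\neq0$, which is your transversality. Continuity plus a Taylor expansion then gives the segment condition on a neighborhood of $z$.

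\textbf{The critical point is not an obstacle.} Your planned pull-back through $\Psi$ is unnecessary, and its stated justification is wrong. You already observed that $\nabla_{(x,t)}\phi(0,0)=(0,\phi_t(0,0))\neq0$. So $\partial\cQ$ is a smooth hypersurface near the origin, and $w=(0,-\mathrm{s})$ works directly in the original coordinates with no normal form.

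The island cases are covered by the same argument. There $\phi(\cdot,0)$ has a strict local minimum $0$ at $x=0$, and $\phi_t$ has constant sign near the origin. Hence $\phi\ge0$ on the side $t\le 0$ (creation) or $t\ge0$ (vanishing) that is excluded from $\cQ$. So $\cQ$ coincides locally with $\{\phi<0\}$, and your second bullet applies verbatim.

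By contrast, your paraboloid argument does not by itself give the segment condition in the original coordinates. It shows that vertical segments stay inside the domain in normal coordinates, but $\Psi$ maps segments to curves. Moreover, the fact you intended to use, that $D\Psi(0,0)$ preserves the time direction, is false in general. The spatial component of $\partial_t\Psi(0,0)$ is $(v^{-1})'(0)\,\partial_t\psi(0,0)$. Since $\psi(0,t)$ is the spatial critical point of $\phi(\cdot,t)$, $\partial_t\psi(0,0)$ is that point's velocity, which need not vanish.

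The rest of your argument matches the paper's and is fine: localization, translation with $u_{j,\tau}\in H$ near $\overline\cQ\cap\operatorname{supp}\chi_j$, and mollification with $\eta\ll\tau$.
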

\begin{proof}
 The arguments used in standard references, e.g. \cite{Adams2003,Evans}, for proving density of globally smooth functions in the standard Sobolev space $W^{m,p}(\Omega)$ also apply to this anisotropic case. Details are given in Appendix~\ref{AppA}. 
\end{proof}
\smallskip

Recall that $V_\Gamma$ is the normal velocity of $\Gamma(t)$, cf. \eqref{Vg}. Then, for any $u\in C^\infty(\overline \cQ)$, it holds that
\begin{equation}\label{TraceEsta}
\int_{\Gamma_\cQ}(1+|V_\Gamma|^2)^{-\frac12}u^2\,dq= 
\int_{-1}^1\int_{\Gamma(t)}u^2\,ds\,dt.
\end{equation}
To define a trace on $\Gamma_\cQ$  for functions from $H$, we introduce the weight function
\[
\omega=\left\{\begin{split}
|t|^{\frac12}(1+|V_\Gamma|^2)^{-\frac12} &\quad \text{for cases 2a), 3a),}\\
(1+|V_\Gamma|^2)^{-\frac12} &\quad  \text{for other cases}.
\end{split} \right.
\]
The corresponding  weighted $L^2$-space is denoted by $L^2_{\omega}(\Gamma_\cQ)$.
From the definition of $\omega$ and the uniform trace inequalities \eqref{UtraceA}--\eqref{Utrace} it follows that 
\begin{equation}\label{TraceEst}
	\|u\|_{L^2_{\omega}(\Gamma_\cQ)} \le C \|u\|_H \quad \text{for all}~u\in C^\infty(\overline \cQ)
\end{equation}
holds. Owing to the density result in Theorem~\ref{densityH} and a standard continuity argument, the trace operator $u|{\Gamma\cQ}:, H \to L^2_{\omega}(\Gamma_\cQ)$ is well defined.
%$ for $u\in H$ as a linear bounded 
%operator from $H$ into the weighted $L^2$-space $L^2_{\omega}(\Gamma_\cQ)$.

We now can define the space of functions from $H$ vanishing on the space boundary of $Q$ in the sense of their traces,
\[
H_0= \{u\in H\,:\, u|_{\Gamma_\cQ}=0\}.
\]

%Below we show that $C^\infty_c(\cQ)$ is dense in $H_0$. 
\begin{theorem} \label{densityH0}
	The space $C^\infty_c(\cQ)$ is dense in $H_0$.
\end{theorem}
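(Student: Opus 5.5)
We will prove the density by a partition of unity in space--time, which splits the argument into a neighbourhood of the critical point and the regular part of $\cQ$. Fix $u\in H_0$ and $\eps>0$. We choose a cutoff $\chi\in C^\infty(\R^{d+1})$ with $\chi=1$ near $(0,0)$, supported in a small ball $B_r$ around the origin, and write
\[
u=\chi u+(1-\chi)u.
\]
Both pieces lie in $H_0$, since multiplying by a smooth function preserves $H$ and preserves zero traces; the latter follows from Theorem~\ref{densityH} together with continuity of the trace. We treat the two pieces separately.

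\textbf{The regular part.} Away from the critical point, $\GQ$ is a smooth hypersurface with $\nabla\phi\neq0$. Using the flow maps $\Phi_\pm$ from \eqref{trajectories}, or local flattenings $x\mapsto(\phi(x,t),\tilde x)$ with $t$ kept as a parameter, we map $\cQ\setminus B_{r/2}$ locally onto half-cylinders in which the boundary is $\{y_1=0\}$. These transformations are smooth in $(x,t)$ and map spatial gradients to spatial gradients, with bounded Jacobians. Hence $H$ is mapped to the analogous anisotropic space. In the flattened coordinates we apply the standard argument: translate the function inward in the normal direction, which is a spatial direction, and then mollify in all variables. The translation uses only $L^2$ continuity of shifts for $u$ and $\nabla u$, and vanishing traces give the extension-by-zero property in $H$. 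This produces compactly supported smooth approximants. The main subtlety here is the behaviour near $t=\pm1$, where we approximate up to the time boundary as in Theorem~\ref{densityH}, since $C_c^\infty(\cQ)$ allows nonzero values there only through shifts in $t$. I expect to handle this with an additional time-direction shift into the interior.

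\textbf{The critical part.} For $\chi u$ the singular geometry enters, and this is the main obstacle. My first attempt is to exploit the zero extension. We extend $\chi u$ by zero to $\tilde u$ on $\R^{d+1}$ and check that $\tilde u$ has spatial weak derivatives equal to the zero extension of $\nabla u$. By Fubini it suffices to show, for almost every $t\neq0$, that $u(\cdot,t)\in H_0^1(\Omega(t))$. This follows from the vanishing weighted trace and \eqref{TraceEsta}, which makes the spatial trace zero for a.e.\ $t$; the single time $t=0$ has measure zero.

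Then $\tilde u\in L^2(\R;H^1(\R^d))$ is supported in $\overline\cQ$. To push the support strictly inside $\cQ$, we use the normal form \eqref{normalform2} via $\Psi$. In these coordinates $\cQ=\{-|x'|^2+|x''|^2+\mathrm{s}\,t<0\}$. We need a family of maps $T_\delta$ under which the composite $\tilde u\circ T_\delta^{-1}$ is supported in $\{\phi\circ\Psi\le -c\delta\}$, and which converges to the identity in the sense of $H$. The natural candidate is the time shift $t\mapsto t+\mathrm{s}\,\delta$. It moves $\overline\cQ$ into $\{\phi\le-\delta\}$, is isometric on $L^2$, and commutes with $\nabla$. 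Translations are continuous in $L^2(\R^{d+1})$, so $\tilde u(\cdot,\cdot+\mathrm{s}\delta)\to\tilde u$ in $H$, and this avoids any unbounded normal velocity. The catch is that $\Psi$ mixes $x$ and $t$, so the shift must be composed with $\Psi$. Since $\Psi$ is a smooth diffeomorphism whose $t$-dependence enters only through $\psi(\cdot,v^{-1}(t))$, the pulled-back space still controls only spatial derivatives, and I expect the Jacobian bookkeeping to work.

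A potential gap is whether time-shifted functions still have compact support in $\cQ$ near the other boundary parts within $B_r$. Because $\phi$ strictly decreases by $\delta$ while $\chi$ localizes, the support becomes compact in $\cQ$. Mollifying in all variables with radius smaller than the distance to $\partial\cQ$ then gives $C_c^\infty(\cQ)$ functions close in $H$. For the island cases 2a) and 3a), the time shift has to be directed so as to push away from the endpoint $t=0$, and the estimate is the same. Combining the two parts completes the density argument.
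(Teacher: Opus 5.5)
Your argument is correct in outline, but it takes a genuinely different route from the paper.

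The paper never touches the singular geometry. Since $H$ involves no time derivatives, multiplying $u\in H_0$ by the indicator of $\{|t|>\eps\}$ keeps it in $H_0$, with $\|u-u_\eps\|_H\to0$. On the remaining set, $\cQ$ splits into two pieces that are smooth images of cylinders under \eqref{Diff}, and there density of $C_c^\infty$ is classical.

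You instead resolve the critical point directly. First you show that the zero extension of $u$ has spatial weak derivatives on $\R^{d+1}$. You get this from the slice-wise property $u(\cdot,t)\in H^1_0(\Omega(t))$ for a.e.\ $t$, which follows from Theorem~\ref{densityH}, \eqref{TraceEsta} and the classical trace on $\Omega(t)$, $t\neq0$, along an a.e.-convergent subsequence. Then you translate inward, using the time direction near $(0,0)$. This is legitimate because translations commute with spatial derivatives, and $\nabla_{(x,t)}\phi(0,0)=(0,\partial_t\phi(0,0))\neq0$ makes $\GQ$ transversal to the $t$-axis there.

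Two simplifications are available:
\begin{itemize}
\item The normal form is superfluous for this step. Since $|\partial_t\phi|\ge c>0$ near $(0,0)$, the plain shift $t\mapsto t+\mathrm{s}\delta$ in the original coordinates already moves the support into $\{\phi\le-c\delta\}$, and no Jacobian bookkeeping is needed.
\item Your handling of $t=\pm1$ is left vague. The paper's sharp cut-off in $t$ disposes of it at once. In your flattened coordinates an extra time shift also works, because there the lateral boundary is $t$-independent.
\end{itemize}

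As for what each route buys: the paper's proof is shorter, and as a density argument it uses nothing about the critical point beyond its confinement to the single time $t=0$; neither $\partial_t\phi\neq0$ nor nondegeneracy enters. Yours is local and self-contained. It also makes explicit the zero-extension property of $H_0$, which is hidden in the paper's appeal to standard arguments on the two smooth pieces.
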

\begin{proof}
For a given $\eps \in (0,1)$, define $\cQ_\eps=\bigcup\limits_{t\in(-1,-\eps)\cup (\eps,1)}\hskip-2ex \Omega(t)\times\{t\}$. For  an arbitrary  $u\in H_0$ define $u_\eps=\left\{\begin{array}{ll}
		u & \text{in } \cQ_\eps,\\
		0 & \text{in } \cQ\setminus\cQ_\eps.
	\end{array}\right.$
We have $u_\eps\in H_0$ and $\|u-u_\eps\|_H\to0$ for $\eps\to0$. For any fixed $\eps>0$ the domain $\cQ_\eps$ is the union of two subdomains $\cQ_\eps^{\pm}$, each corresponding to a smooth evolution of $\Omega(t)$, cf. \eqref{Diff}. For example,	$\cQ_\eps^+=\bigcup_{t\in(\eps,1)}\Omega(t)\times\{t\}$, where $\Omega(t)=\Phi^+(t)(\Omega(1))$ and $\Phi^+\in C^\infty(\overline{\Omega(1)}\times[\eps,1])$. By standard arguments based on cutting off and mollifying, see, e.g. \cite{lions1957}, one shows that there is a function $\psi_\eps\in C^\infty_c(\cQ_\eps)$  such that $\|u_\eps-\psi_\eps\|_{L^2(\cQ_\eps)}+\|\nabla(u_\eps-\psi_\eps)\|_{L^2(\cQ_\eps)}^2\le \eps$. Extending $\psi_\eps$  by zero to  $\cQ$ and letting $\eps\to 0$ yields by the triangle inequality $\|u-\psi_\eps\|_H\to0$ and so proves the theorem.\quad
\end{proof}
\smallskip

Besides the (test) space $H_0$ we also need a suitable solution space containing elements from $H_0$ that have  a well-defined  weak time derivative. 
For $u\in H_0$, following a standard approach we consider $\partial_t u$ as a distribution: 
\[
\langle \partial_t u,\xi\rangle:= -\int_{\cQ} u\,\partial_t \xi\, dq,\quad \text{for}~\xi\in C^\infty_c(\cQ).
\]
We restrict to $u \in H_0$ for which  
\[
\|\partial_t u\|_{H^{-1}}:=\sup_{\xi\in C^\infty_c(Q)}\frac{\langle \partial_t u,\xi\rangle}{ \|\xi\|_H} <\infty
\]
holds. In this case, $\partial_t u$ is uniquely  extended to a linear bounded functional on $H_0$. 

As the solution space for our problem we propose
\begin{equation} \label{defW}
W = \left\{u\in H_0\,|\, \partial_t u \in H_0^{-1}\right\}, \quad\text{with}~\|u\|_W=\left(\|u\|_H^2+\|\partial_t u\|_{H^{-1}}^2\right)^{\frac12}.
\end{equation} 
In the next section we shall see that smooth functions vanishing on $\Gamma_\cQ$ are dense in $W$ for certain classes of the classification from Section~\ref{secclassification}. We also show a Lions-Magenes type result for $W$. 

\section{Properties of the solution space $W$} \label{s:spaceW}
In this section, we derive useful properties of the solution space $W$.

To prove the density of smooth functions in $W$, we adapt an approach from \cite{lions1957}. 
For this purpose, we introduce a cut-off function 
$h \in C^\infty([0,\infty))$ such that $h=0$ on $[0,1)$, $h=1$ on $[3,\infty)$, 
$0 \le h \le 1$, and $|h'| \le 1$ everywhere. 
Define $h_\varepsilon(r)=h(r/\varepsilon)$ and
\begin{equation} \label{cutoff}
	\theta_\varepsilon(x,t)
	=
	h_\varepsilon\big(-\phi(x,t)\big),
	\qquad (x,t)\in \cQ .
\end{equation}

Note that $\theta_\varepsilon$ cuts off near the spatial boundary 
$\Gamma_\cQ \subset \mathbb{R}^{d+1}$.
A key property of $\theta_\varepsilon$ is that it is smooth and vanishes in a strip 
(in $\mathbb{R}^{d+1}$) aligned with $\Gamma_\cQ$, whose \textit{space--time} width is 
proportional to $\varepsilon$: Using the fact that $|\nabla_{(x,t)} \phi| \ge c > 0$ on $\Gamma_\cQ$, 
it follows that there exist strictly positive constants $c_0$ and $c_1$ such that, 
for $\varepsilon$ sufficiently small,
\begin{equation} \label{distance}
	\operatorname{dist}(\operatorname{supp}(\theta_\varepsilon), \Gamma_\cQ) 
	\ge c_0 \varepsilon,
	\qquad
	\operatorname{dist}(\operatorname{supp}(1-\theta_\varepsilon), \Gamma_\cQ) 
	\le c_1 \varepsilon .
\end{equation}

Let $u \in H_0$ and define
\[
u_\varepsilon := \theta_\varepsilon u .
\]

The analysis in the remainder of this section proceeds in the following main steps:
\begin{itemize}
	\item[a)] (Lemma~\ref{lemA}) 
	For $u \in H_0$, we show (for certain classification cases) that 
	$\lim\limits_{\eps \to 0} u_\varepsilon = u$ in $H$. 
	In Remark~\ref{RemdiscussionHoles} we explain, why we are not able to derive such a result for the cases 2c) and 3d). 
	
	\item[b)] (Lemma~\ref{L2}) 
	For $u \in W$, we use the result from a) to show that there exists a sequence 
	$(w_n)_{n \ge 1}$ with $w_n$ vanishing on $\Gamma_\cQ$ and 
	$\lim\limits_{n \to \infty} w_n = u$ in $W$.
	
	\item[c)] (Lemma~\ref{L3}) 
	Based on the result in b), we extend functions $u \in W$ by zero outside $\cQ$ 
	and apply a density result for smooth functions in the cylindrical case. 
	This yields the density of smooth functions in $W$.
	
	\item[d)] (Lemma~\ref{L4}) 
	We prove a Lions--Magenes-type result that gives meaning to traces of functions 
	from $W$ on $\Omega(t)$ as elements of $L^2(\Omega(t))$.
\end{itemize}
\smallskip

Concerning the rather technical proof of Lemma~\ref{lemA}, we note the following.
The crucial point in the analysis is to show that
$
\lim\limits_{\eps \to 0} \|(\nabla \theta_\varepsilon) u\|_{\cQ} = 0.
$
On the domain $\cQ \setminus \cO_0$, where $\cO_0$ is a fixed sufficiently small 
space--time neighborhood of the critical point $(0,0)$, we can use arguments that 
are essentially the same as those in \cite{lions1957}. 
On the domain $\cQ \cap \cO_0$, we exploit the explicit structure of the space--time 
singularity provided by Lemma~\ref{L:Morse} and \eqref{normalform2}.
For the latter analysis, a one-dimensional Hardy estimate, presented in the next lemma, is useful.
\begin{lemma} \label{1DPoincare}
Take $0\le a<b$ and $u \in H^1(a,b)$ with $u(b)=0$. Then for $p\geq 0$, the estimate
\begin{equation} \label{ResP1}
 \int_a^b \left(\frac{u}{b-x}\right)^2 x^p \, dx \leq C_p \int_a^b (u')^2  x^p\, dx
\end{equation}
holds with a constant $C_p <\infty $ independent of $a,b$ and $u$. 
\end{lemma}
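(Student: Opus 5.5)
The plan is to reduce \eqref{ResP1} to the classical one-dimensional Hardy inequality. The weight $x^p$ is handled by monotonicity alone: since $0\le a$ and $p\ge 0$, the function $x\mapsto x^p$ is nondecreasing on $[a,b]$. This should give a constant that does not even depend on $p$ (one expects $C_p=4$), and it is clearly independent of $a$, $b$ and $u$.

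\emph{Step 1 (pointwise bound).} Since $u\in H^1(a,b)$, it is absolutely continuous on $[a,b]$. Together with $u(b)=0$ this gives $u(x)=-\int_x^b u'(y)\,dy$. For $x\le y\le b$ we have $x^{p/2}\le y^{p/2}$, so with $g(y):=|u'(y)|\,y^{p/2}\in L^2(a,b)$ we get
\[
x^{p/2}|u(x)|\le \int_x^b |u'(y)|\,y^{p/2}\,dy=\int_x^b g(y)\,dy ,\qquad x\in(a,b).
\]
Hence the left-hand side of \eqref{ResP1} is bounded by $\int_a^b \big((b-x)^{-1}\int_x^b g\big)^2dx$.

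\emph{Step 2 (change of variables).} Set $s=b-x$, $L=b-a$ and $G(s):=g(b-s)$. The bound from Step~1 becomes $\int_0^L\big(s^{-1}\int_0^s G(\sigma)\,d\sigma\big)^2ds$. We also have $\|G\|_{L^2(0,L)}^2=\int_a^b (u')^2x^p\,dx$, so it remains to show
\[
\int_0^L\Big(\frac1s\int_0^s G\Big)^2ds\le 4\int_0^L G^2\,ds .
\]

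\emph{Step 3 (classical Hardy inequality).} Substituting $\sigma=s\tau$ gives $s^{-1}\int_0^s G(\sigma)\,d\sigma=\int_0^1 G(s\tau)\,d\tau$. Extend $G$ by zero outside $(0,L)$. Minkowski's integral inequality then yields
\[
\Big\|\int_0^1 G(\cdot\,\tau)\,d\tau\Big\|_{L^2(0,L)}\le \int_0^1\|G(\cdot\,\tau)\|_{L^2(0,L)}\,d\tau\le \int_0^1\tau^{-1/2}\,d\tau\,\|G\|_{L^2(0,L)}=2\|G\|_{L^2(0,L)} .
\]
Squaring gives the claim with $C_p=4$.

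The only delicate point is the monotonicity step in Step~1, which moves the weight inside the integral. This is exactly where the hypotheses $a\ge0$ and $p\ge0$ are used: for $p<0$, or for an interval containing negative $x$, the inequality $x^{p/2}\le y^{p/2}$ would fail. The rest is the standard Hardy argument, and every constant is scale-free, so uniformity in $a$ and $b$ is automatic.
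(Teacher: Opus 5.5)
Your proof is correct, and it takes a different route from the paper's.

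\textbf{The paper's route.} The paper first rescales to $b=1$ and then splits $(a,1)$ at $\alpha=\max\{\tfrac12,a\}$.
\begin{itemize}
\item On $[\alpha,1]$ it bounds $x^p\le 1$ and applies the unweighted Hardy inequality. It then restores the weight using $1\le 2^p x^p$.
\item On $(a,\tfrac12)$ it uses $(1-x)^{-2}\le 4$ together with a weighted Poincar\'e estimate $\int_a^1 u^2x^p\,dx\le \frac{4}{(p+1)^2}\int_a^1 (u')^2x^p\,dx$. This estimate comes from integrating by parts against $x^{p+1}/(p+1)$, discarding the nonpositive boundary term at $a$, and using $x^{p+1}\le x^p$ on $[0,1]$.
\end{itemize}
The resulting constant is of order $4\cdot 2^p+16/(p+1)^2$, which grows exponentially in $p$.

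\textbf{Your route.} You use the monotonicity of $x^{p/2}$ on $[0,b]$ to move the weight inside the representation $u(x)=-\int_x^b u'$. This reduces the whole estimate in one step to the unweighted Hardy inequality for the averaging operator, which you prove cleanly via Minkowski's inequality. You need no rescaling and no splitting, and you obtain the $p$-independent constant $C_p=4$, the classical Hardy constant.

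\textbf{Comparison.} Both arguments use $a\ge 0$ and $p\ge 0$ in an essential way. Yours uses them through the monotonicity of the weight. The paper's uses them through $x^p\le b^p$, the comparability of $x^p$ with $b^p$ near $x=b$, and the sign of the boundary term $a^{p+1}u(a)^2$. The paper only applies the lemma for a few fixed small values of $p$, so the uniform constant is a bonus rather than a necessity. Still, your argument is shorter and gives a strictly better constant.
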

\begin{proof} By the change of the variables $x=bx'$, we see that \eqref{ResP1}
is equivalent to 	
\begin{equation} \label{ResP2}
 \int_a^1 \left(\frac{u}{1-x}\right)^2 x^p \, dx \leq C_p \int_a^1 (u')^2  x^p\, dx.
\end{equation}
for $u \in H^1(a,1)$ with $u(1)=0$ and $a\in [0,1)$.
For $p=0$ this a standard Hardy estimate. Take $p \geq 0$, let $\alpha=\max\{\tfrac{1}{2},a\}$.
% and split
%\begin{equation} \label{ResP3}
% \int_0^1 \left(\frac{u}{1-x}\right)^2 x^p \, dx = \int_0^\frac12 \left(\frac{u}{1-x}\right)^2 x^p \, dx + \int_\frac12^1 \left(\frac{u}{1-x}\right)^2 x^p \, dx.
%\end{equation}
For the integral over $[\alpha,1]$ we use the Hardy inequality on that interval and obtain
\begin{equation} \label{ResP4}
\begin{split}
\int_\alpha^1 \left(\frac{u}{1-x}\right)^2 x^p \, dx & \leq \int_\alpha^1 \left(\frac{u}{1-x}\right)^2  \, dx \leq
\tilde c_H \int_\alpha^1 (u')^2  \, dx  \\ & \leq \tilde c_H 2^p \int_\alpha^1 (u')^2 x^p  \, dx \leq \tilde c_H 2^p \int_a^1 (u')^2 x^p  \, dx.
\end{split} \end{equation}
If $a<\tfrac12$, we have another integral over $(a,\tfrac12)$. We first note
\begin{equation} \label{ResP5}
\int_a^{\tfrac12} \left(\frac{u}{1-x}\right)^2 x^p \, dx \leq 4 \int_a^1 u^2 x^p \, dx
\end{equation}
and also
\begin{align*}
 \int_a^1 u^2 x^p \, dx &  = - \frac{a^{p+1}}{p+1}u^2(a)  - \frac{2}{p+1} \int_a^1 u u' x^{p+1}\, dx \\ 
 & \leq  \frac{2}{p+1} \left(\int_0^1 u^2 x^p\, dx\right)^\frac12 \left(\int_a^1 (u')^2 x^p\, dx\right)^\frac12 \\
 & \leq  \frac12 \int_a^1 u^2 x^p\, dx + \frac{2}{(p+1)^2}\int_a^1 (u')^2 x^p\, dx.
\end{align*}
Shifting the integral of $u^2 x^p$ to the left
%, noting that
%$a^{p+1}u^2(a)\le a^{p+1}\int_a^1(u')^2 dx \le \int_a^1 (u')^2 x^p\, dx$,
  and using  
this in \eqref{ResP5} completes the proof of \eqref{ResP2}.
\end{proof}

\smallskip
\begin{lemma} \label{lemA} Consider a space-time domain with critical points as in  classification \ref{Class}, except for the cases 2c) and 3d). 
 For $u \in H_0$ we have $\lim\limits_{\eps \to 0}\|u- u_\eps\|_H=0$.
\end{lemma}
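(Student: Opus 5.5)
We split $u-u_\eps=(1-\theta_\eps)u$ and note that $\nabla(u-u_\eps)=(1-\theta_\eps)\nabla u-(\nabla\theta_\eps)u$. Since $0\le 1-\theta_\eps\le 1$ and $1-\theta_\eps\to 0$ pointwise in $\cQ$ (its support shrinks to $\GQ$ by \eqref{distance}), dominated convergence gives $\|(1-\theta_\eps)u\|_\cQ+\|(1-\theta_\eps)\nabla u\|_\cQ\to0$. Everything therefore reduces to showing $\|(\nabla\theta_\eps)u\|_\cQ\to0$. Here $\nabla\theta_\eps=-\eps^{-1}h'(-\phi/\eps)\nabla\phi$ is supported in the strip $S_\eps=\{\eps\le-\phi\le3\eps\}$. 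By Theorem~\ref{densityH0} we may first argue for $u\in C_c^\infty(\cQ)$ only if the bound is uniform. So the plan is to prove the estimate
\begin{equation*}
\|(\nabla\theta_\eps)u\|_{\cQ}^2\le C\int_{S_\eps'}|\nabla u|^2\,dq,
\end{equation*}
with $C$ independent of $\eps$ and $u\in H_0$, where $S'_\eps=\{0<-\phi<3\eps\}$ (possibly slightly enlarged). The right-hand side tends to zero by absolute continuity of the integral, since $|S'_\eps|\to0$. Proving this bound for $C_c^\infty(\cQ)$ functions and then passing to $H_0$ by Theorem~\ref{densityH0} suffices, because both sides are continuous in the $H$-norm for fixed $\eps$.

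Away from the critical point, on $\cQ\setminus\cO_0$, we have $|\nabla\phi|\ge c>0$ on $\GQ\setminus\cO_0$, so $-\phi$ is comparable to the spatial distance to $\Gamma(t)$ uniformly in $t$. We use local coordinates $x=\xi-s\bn(\xi,t)$ with $\xi\in\Gamma(t)$, $s\in(0,s_0)$, depending smoothly on $t$. In these coordinates $|\nabla\theta_\eps|\le C\eps^{-1}\le C'/s$ on $S_\eps$, and the 1D Hardy inequality (Lemma~\ref{1DPoincare} with $p=0$, applied along each normal line with $u=0$ at $s=0$) bounds $\int u^2/s^2$ by $\int|\partial_s u|^2$ over $S'_\eps$. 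Integrating over $\xi$ and $t$ gives the estimate, as in \cite{lions1957}. Only spatial derivatives appear, which is exactly what the space $H$ controls.

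Near the critical point, we pass to the normal form coordinates \eqref{normalform2} via $\Psi$. This changes $H$-norms and the strip only by bounded factors, but $\Psi$ mixes $t$ into $x$ only through $\psi(\cdot,v^{-1}(t))$, which is a spatial diffeomorphism for fixed time. Hence spatial gradients are preserved up to constants, and $\phi=-|x'|^2+|x''|^2+\text{s}\,t$. We then choose, for each case, straight spatial lines transversal to $\Gamma(t)$ along which $-\phi$ grows at least linearly with the distance from $\Gamma(t)$, uniformly in $t$. For 2b), 3b) and 3c) we reuse the directions $w$ from the proof of Lemma~\ref{uniformtrace}, for which $w\cdot\nabla\phi\le0$ and $w^T\nabla^2\phi\,w=0$. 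Then $-\phi(\xi+\alpha w)=\alpha|w\cdot\nabla\phi(\xi)|$, and the strip $\{\eps\le-\phi\le3\eps\}$ along each such line is an interval $\alpha\in[\eps/g,3\eps/g]$ with $g=|w\cdot\nabla\phi(\xi)|$, on which $|\nabla\theta_\eps|\le C\eps^{-1}|\nabla\phi|$. We need $|\nabla\phi|/g$ bounded, and this follows from the bound $|w^Tn|\ge\tfrac12$ established there. This gives $|\nabla\theta_\eps|\le C/\alpha$, and Hardy in $\alpha$ closes the estimate, after integrating over $\xi\in\Gamma_+(t)$ with Jacobian bounded by \eqref{conditie1}.

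For 2a) and 3a), $\Omega(t)$ is a ball of radius $|t|^{1/2}$, and we use radial lines. With $r=|x|$ and $R=|t|^{1/2}$ we get $-\phi=R^2-r^2=(R-r)(R+r)$, so $|\nabla\theta_\eps|\le C\eps^{-1}r$ on the strip, where $\eps\le(R-r)(R+r)\le 2R(R-r)$. Hence $|\nabla\theta_\eps|\le C/(R-r)$, with possibly extra factors near $r\approx 0$ when $\eps\sim R^2$, which is handled by $r\le R$. The weighted Hardy inequality of Lemma~\ref{1DPoincare} with $p=d-1$ (the polar Jacobian $r^{d-1}$) and $a=0$, $b=R$, with constant independent of $R$, gives the bound uniformly in $t$. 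Integrating over the angles and over $t$ yields the estimate.

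The main obstacle is this near-critical step, namely getting constants uniform as $t\to0$ and as lines approach the singular point. The place I expect trouble is the exact region covered: the subsets $\Gamma_+(t)$ must, together with their symmetric copies, cover the whole strip $S_\eps\cap\cO_0$. I would check this cover explicitly in the normal form coordinates, or else bound the leftover piece directly, using that it lies at a positive distance from the origin.
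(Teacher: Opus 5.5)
Your reduction steps are sound and agree with the paper: the triangle inequality, the density argument (both sides are continuous in $\|\cdot\|_H$ for fixed $\eps$), the regular part via a 1D Hardy/Poincar\'e inequality along transversal lines, and the observation that $\Psi$ maps spatial gradients to spatial gradients. The gap is in the near-critical estimate for the saddle cases 2b), 3b), 3c).

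You deduce from $|w^Tn|\ge\frac12$ that $|\nabla\phi|/g$ is bounded on the strip. That inequality holds only at the foot point, where it gives $|\nabla\phi(\xi)|\le 2g$. Along the line, $\nabla\phi(\xi+\alpha w)=\nabla\phi(\xi)+\alpha\nabla^2\phi\,w$. Although $w^T\nabla^2\phi\,w=0$, the vector $\nabla^2\phi\,w$ has length $2$ in all three normal forms. Since the strip lies at $\alpha\in[\eps/g,3\eps/g]$, you only get $\alpha|\nabla\theta_\eps|\le 6+18\eps/g^2$, and this is attained. In case 2b) at $t=0$, take $\xi=s(1,1)^T/\sqrt2$. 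Then $-\phi(\xi+\alpha w)=2s\alpha$ and $|\nabla\phi(\xi+\alpha w)|=2(s^2+\alpha^2)^{1/2}$. So on the strip ($\alpha\sim\eps/s$), $\alpha|\nabla\theta_\eps|$ is of order $\eps/s^2$, e.g.\ $\eps^{-1/2}$ for $s=\eps^{3/4}$, at points still within $O(\eps^{1/4})$ of the origin. Testing with $u=\alpha$ along such a line shows that the one-dimensional inequality itself fails with an $\eps$-independent constant, so Hardy in $\alpha$ cannot close the argument.

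In addition, the four reflected families do not cover the strip. For $t<0$, the line through the origin parallel to $w$ is an asymptote of $\Gamma(t)$ (on it $\phi\equiv t$). For $\eps<|t|<3\eps$ the strip contains a neighbourhood of the origin, and its first-quadrant points with $x_1+x_2<\sqrt{|t|}$ are reached by none of the families. This leftover contains the critical point, so your fallback (``positive distance from the origin'') is not available.

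What is missing is a family of segments that exhausts the strip near the origin for all small $|t|$ and satisfies a uniform bound of the form (length from $\Gamma(t)$ to the point along the segment)$\,\cdot\,|\nabla\phi|\lesssim\eps$. The paper obtains this by switching direction with the sign of $t$. In case 2b) it integrates in $x_1$ over $[\sqrt{x_2^2+t},\sqrt{x_2^2+t+3\eps}]$ for $t\ge0$, and in $x_2$ over $[[x_1^2-t-3\eps]_+^{1/2},\sqrt{x_1^2-t}]$ for $t\le0$. These segments are exactly the solution sets of $-3\eps\le\phi\le0$ in one variable, so coverage is automatic. The elementary bounds $(x_1-\sqrt{x_2^2+t})^2|x|^2\le 9\eps^2$ and \eqref{HH4} replace your pointwise bound, and Lemma~\ref{1DPoincare} with $p=0$ (with $p\in\{0,1\}$ after polar coordinates for $d=3$) then closes the estimate.

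A smaller slip: in the island case you must apply Lemma~\ref{1DPoincare} with $a=[R^2-3\eps]_+^{1/2}$ rather than $a=0$. With $a=0$ the right-hand side is $\|\partial_r u\|$ over the whole ball $\Omega(t)$, which does not tend to zero.
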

\begin{proof} 
Take $ u \in H_0$. 
By the triangle inequality, it holds
\begin{equation}\label{aux281}
	\|u- u_\eps\|_H\le \|(1-{\theta}_\eps)u\|_{\cQ} +  \|(1-{\theta}_\eps)\nabla u\|_{\cQ} + \|(\nabla{\theta}_\eps)u\|_{\cQ}.
\end{equation}
We use the notation $\cQ_\eps:=\text{supp}(1-\theta_\eps)$. 
For the first term in \eqref{aux281}, we have $\|(1-{\theta}_\eps)u\|_{\cQ}\le
\|u\|_{{\cQ}_\eps}\to 0 $ for $\eps\to 0$, since $u^2\in L^1(\cQ)$ if $u\in H$ and $\mbox{meas}({\cQ}_\eps)\to 0$, cf. \eqref{distance}. By the same argument the second term on the right hand side of \eqref{aux281} goes to zero with vanishing  $\eps$. For the third term we note $\|(\nabla{\theta}_\eps)u\|_{\cQ} = \|(\nabla\theta_\eps)u\|_{\cQ_\eps}$. In the rest of the proof we show that $\|(\nabla\theta_\eps)u\|_{\cQ_\eps}\to0$ for $\eps\to 0$. 
\smallskip

\noindent\textbf{A finite cover.} We introduce a finite cover of $\Qeps$ with cubes in $\R^{d+1}$ with side length $2 \delta$. The value of $\delta$ will be chosen sufficiently small, but independent of $\eps$. %, and is determined in the analysis below. 
This cover is denoted by $\{\cO_i\}_{0 \leq i \leq N}$, $N=N(\delta)$, $\cO_i=\{\, y \in \R^{d+1}~|~\|y-a^{(i)}\|_\infty < \delta\,\}$, with centers $a^{(0)}:=(0,0)$,  $a^{(i)} \in \Gamma_\cQ$, $1 \leq i\leq N$, and $\cQ_\eps \subset \cup_{0 \leq i \leq N} \cO_i$. We choose this cover such that $(0,0) \notin \overline{\cup_{1\leq i \leq N} \cO_i} $ and $|\nabla \phi| > 0$ on $\overline{\cup_{1\leq i \leq N} \cO_i}$. Note that
\begin{equation} \label{sumeq}
 \|(\nabla\theta_\eps)u\|_{\cQ_\eps}^2\leq \|(\nabla\theta_\eps)u\|_{\cO_0\cap \cQ_\eps}^2+  \sum_{i=1}^N\|(\nabla\theta_\eps)u\|_{\cO_i\cap \cQ_\eps}^2.
\end{equation}
We can make use of the density of smooth functions (Theorem~\ref{densityH0}) as follows. Consider a term $\|(\nabla\theta_\eps)u\|_{\cO_i\cap \cQ_\eps}$, $0  \leq i \leq N$.  Assume that there is a constant $C$, independent of $\eps$,  such that
\begin{equation} \label{mainestimate}
\|(\nabla\theta_\eps)u\|_{\cO_i\cap\cQ_\eps} \leq C \|\nabla u\|_{U_\eps} \quad \text{for all}~u \in C_c^\infty (\cQ)
\end{equation}
holds for a domain $U_\eps$ with $(\cO_i \cap \cQ_\eps) \subset U_\eps \subset \cQ$ with ${\rm meas}(U_\eps) \to 0$ for $\eps \to 0$. For the given $u \in H_0$ we take a sequence $(u_k)_{k \geq 1} \subset C_c^\infty(\cQ)$ with $\lim_{k\to \infty} u_k=u$ in $H$ and note:
\begin{align*}
  \|(\nabla\theta_\eps)u\|_{\cO_i\cap \cQ_\eps}  & \leq \|\nabla\theta_\eps(u-u_k)\|_{\cO_i\cap\cQ_\eps} + \|(\nabla\theta_\eps)u_k\|_{\cO_i\cap\cQ_\eps} \leq c \eps^{-1}\|u-u_k\|_H+ C \|\nabla u_k\|_{U_\eps}  \\
   & \leq \tilde c \eps^{-1}\|u-u_k\|_H + C \|\nabla u\|_{U_\eps}.
\end{align*}
Letting $k\to \infty$ and using $ \lim\limits_{\eps \to 0} {\rm meas}(U_\eps)=0$ we conclude $\lim\limits_{\eps \to 0} \|(\nabla\theta_\eps)u\|_{\cO_i\cap \cQ_\eps}=0$. Hence, it suffices to derive the estimate \eqref{mainestimate} for $u \in C_c^\infty (\cQ)$. 
\\[1ex]
\noindent\textbf{Estimate \eqref{mainestimate} away from $(0,0)$.} We first consider the ``regular'' terms $\|(\nabla\theta_\eps)u\|_{\cO_i\cap \cQ_\eps}^2$, $i \geq 1$ and with $u\in C_c^\infty(\cQ)$.  From $\nabla \theta_\eps= -h_\eps'\nabla \phi$ we get 
\[
  \|(\nabla\theta_\eps)u\|_{\cO_i\cap \cQ_\eps} \leq c \eps^{-1}\|u\|_{\cO_i\cap \cQ_\eps}.
\]
In the neighborhood $\cO_i$, $i \geq 1$,  with coordinate $y=(y_1, \ldots, y_{d+1})=(x,t)$, there is a coordinate $y_j$, with $j \leq d$, say $y_1$, such that $\left|\frac{\partial \phi}{\partial y_1}\right| > 0$ on $\overline {\cO_i}$. From the implicit function theorem it follows that there is a smooth function $g: \R^d \to \R$ such that, for $\delta$ sufficiently small, the local space-time boundary $\cO_i \cap \Gamma_\cQ$ can be represented as the graph of $g:\, U_\delta \to \R$, $U_\delta:=\{\, y \in \cO_i~|~y_1=0\,\}$. Furthermore, for a suitable constant $\hat c$ and $\eps$ sufficiently small we have $\cO_i \cap \cQ_\eps\subset \{\, (\hat y, g(\hat y)- \alpha)~|~\hat y \in U_\delta, ~0 \leq \alpha \leq \hat c \eps\,\}$. 
We apply a standard 1D Poincare estimate to obtain
\begin{equation} \label{Aux1}
 \eps^{-2}\|u\|_{\cO_i\cap \cQ_\eps}^2 \leq \eps^{-2} \int_{U_\delta} \int_0^{\hat c \eps} u^2 \, d\alpha d \hat y \leq c\int_{U_\delta} \int_0^{\hat c \eps} |\nabla u|^2 \, d\alpha d\hat y.
\end{equation}
This proves \eqref{mainestimate} with $U_\eps=U_\delta \times [0,\hat c \eps]$, which satisfies  ${\rm meas}(U_\eps)\to 0$ for $\eps \to 0$. \\[1ex]

\noindent\textbf{Estimate \eqref{mainestimate} in a neighborhood of $(0,0)$.}
Consider now the critical term 
$\|(\nabla \theta_\varepsilon) u\|_{\cO_0 \cap \cQ_\varepsilon}$. 
Let $\delta>0$ be sufficiently small such that $\cO_0$ is contained in the 
neighborhood $\hat X$ defined in Lemma~\ref{L:Morse}. 
The map $\Psi$ from \eqref{transform2} induces the coordinate transformation 
$\Psi : (x,t) \mapsto (\psi(x,v^{-1}(t)), v^{-1}(t)) =: (z,t)$ on $\cO_0$.

Define $\widehat\phi(z,t) := \phi \circ \Psi$ and 
$\widehat{\theta}_\varepsilon(z,t) := \theta_\varepsilon \circ \Psi$. 
Note that $\widehat\phi$ has the normal form given in \eqref{normalform2}. 
Since $\Psi$ is $C^\infty$-smooth, transforming the integral 
$\|(\nabla \theta_\varepsilon) u\|_{\cO_0 \cap \cQ_\varepsilon}^2$ 
from $(x,t)$-coordinates to $(z,t)$-coordinates introduces only finite multiplicative constants.
The domain of integration 
$\cO_0 \cap \cQ_\varepsilon = \{ (x,t)\in \cO_0 \mid \theta_\varepsilon(x,t) < 1 \}$ 
is mapped to $\Psi(\cO_0 \cap \cQ_\varepsilon)$. 
One easily verifies that 
$\Psi(\cO_0 \cap \cQ_\varepsilon) 
= \{ (z,t)\in \Psi(\cO_0) \mid \widehat{\theta}_\varepsilon(z,t) < 1 \}$. 
Hence, without loss of generality, we may assume that $\phi$ is in the normal form specified in \eqref{normalform2}.

Using the normal form \eqref{normalform2}, we next estimate 
$\|(\nabla \theta_\varepsilon) u\|_{\cO_0 \cap \cQ_\varepsilon}$ 
and distinguish the cases of a degenerating island, domain merging or splitting, and hole formation.

\smallskip
\noindent\textbf{\small Degenerating island.} We consider  the creation of an island scenario. The case of a vanishing island is then also covered by replacing $t$ with $-t$. Thus $\phi(x,t)=|x|^2 -t$ in $\cO_0$. Recall that  $\cO_0$ has center $(0,0)$ and  $t \in [0,\delta]$. %and assume $\delta$ is sufficiently small such that $v'(t) > 0$ for all $t \in [-\delta,\delta]$. 
Note that
\[
  \cO_0\cap \cQ_\eps \subset 
  \{\, (x,t) \in \cO_0\,|\,-3\eps+ t \leq |x|^2 \leq t\,\}=:\cQ_\eps^{c}.
\]
The domain $\cQ_\eps^{c}$ does not contain $(x,t)$ with $t<0$. %The spatial domain is denoted by $\Omega(t)=\{\, x\in \R^d\,|\, (x,t) \in \cQ_\eps^{c}\,\}$.  
Since $\nabla \theta_\eps = h_\eps'\nabla \phi$ we obtain
\begin{equation} \label{E1}
   \|(\nabla\theta_\eps)u\|_{\cO_0\cap \cQ_\eps}^2   \leq \|(\nabla\theta_\eps)u\|_{\cQ_\eps^{c}}^2 \leq \eps^{-2}\int_{\cQ_\eps^{c}} |\nabla \phi|^2 u^2 \, dq = \eps^{-2}\int_{\cQ_\eps^{c}} |x|^2 u^2\, dq.                                                                                                                                                                                                                                                                                                                                                                                         
\end{equation}
We use radial coordinates and thus obtain, with $[z]_+=\max\{z,0\}$: 
\begin{equation} \label{B1}
 \eps^{-2}\int_{\cQ_\eps^{c}} |x|^2 u^2\, dq  \leq  \eps^{-2}\int_0^\delta \int_{S^{d-1}} \int_{[t-3 \eps]_+^\frac12}^{\sqrt{t}} r^2 u^2 r^{d-1} \, dr \, ds\, dt.
\end{equation}
We study the inner integral. Note that $u=0$ on $\Gamma_\cQ$ and thus  $u|_{r=\sqrt{t}}=0$ allowing us to apply the Hardy estimate \eqref{ResP1}. We first rewrite: 
\begin{equation} \label{B2}
 \int_{[t-3 \eps]_+^\frac12}^{\sqrt{t}} r^2 u^2 r^{d-1} \, dr= \int_{[t-3 \eps]_+^\frac12}^{\sqrt{t}} \big(r (\sqrt{t}-r)\big)^2 \left(\frac{u}{\sqrt{t}-r}\right)^2 r^{d-1} \, dr.
\end{equation}
For the term $\big(r (\sqrt{t}-r)\big)^2$  straightforward computations give: For $0 \leq t \leq 3 \eps$ and $0 \leq r \leq \sqrt{t}$ it holds that  $\big(r (\sqrt{t}-r)\big)^2 \leq r^2 t \leq t^2 \leq 9 \eps^2$. For $t \geq 3\eps$ and $\sqrt{t-3 \eps} \leq r \leq \sqrt{t}$ we have
\[
 \big(r (\sqrt{t}-r)\big)^2 \leq r^2 \big(\sqrt{t}-\sqrt{t-3\eps})^2 \leq \frac{9 \eps^2 t}{(\sqrt{t}+\sqrt{t-3\eps})^2} \leq 9 \eps^2.
\]
Using this in \eqref{B2}, applying the Hardy inequality \eqref{ResP1} and going back to \eqref{B1} we obtain, 
\[ \eps^{-2}\int_{\cQ_\eps^{c}} |x|^2 u^2\, dq  \leq 9  C_{p-1}\int_0^\delta \int_{S^{d-1}} \int_{[t-3 \eps]_+^\frac12}^{\sqrt{t}} \left(\frac{\partial u}{\partial r}\right)^2  r^{d-1} \, dr \, ds\, dt \leq  9  C_{p-1} \|\nabla u \|_{\cQ_\eps^{c}}^2.
\]
Using this in \eqref{E1} and noting that ${\rm meas}(\cQ_\eps^{c}) \to 0$ for $\eps \to 0$ proves \eqref{mainestimate}  for $i=0$. 
\smallskip

\noindent\textbf{\small Domains merging and splitting.}
We now turn to the case of domain merging or splitting. We consider the 2D case, i.e.,  $\phi(x,t)=-x_1^2+x_2^2+t$  (domain splitting, while domain merging is covered by changing $t\to-t$). 
%We assume $\delta $ is sufficiently small such that $v'(t) \geq v_0 >0$ for all $t \in [-\delta,\delta]$. This implies $v_0|t| \leq |v(t)|$ for suitable $v_0>0$ for all $t \in [-\delta,\delta]$. 
As above we have $\cO_0 \cap \cQ_\eps \subset \{\, (x,t) \in \cO_0\,|\,-3 \eps \leq \phi(x,t)\leq 0\,\}=:\cQ_\eps^{c}$.
\\
We first consider $t \in [0,\delta]$ and note that $-3 \eps \leq \phi(x,t)\leq 0$ iff $x_2^2 +t \leq x_1^2 \leq x_2^2 +t+3 \eps$. We parameterize the domain $\cQ_\eps^{c} \cap (t \geq 0)$ over $x_2 \in [-\delta,\delta]$ and due to symmetry we have
\begin{equation} \begin{split} \label{E4}
 \|(\nabla\theta_\eps)u\|_{\cQ_\eps^{c} \cap (t \geq 0)}^2  & \leq c \eps^{-2} \||x| u\|_{\cQ_\eps^{c} \cap (t \geq 0)}^2  \\ & = 2 c \eps^{-2}\int_0^\delta \int_{-\delta}^\delta \int_{\sqrt{x_2^2 +t}}^{\sqrt{x_2^2 +t+3 \eps}} u^2 |x|^2\, dx_1 \, dx_2 \, dt. 
\end{split} \end{equation}
 Rewriting  the inner integral yields
\begin{equation} \label{E5} \begin{split}
  & \eps^{-2}\int_{\sqrt{x_2^2 +t}}^{\sqrt{x_2^2 +t+3 \eps}} u^2 |x|^2\, dx_1  \\ & = \eps^{-2}\int_{\sqrt{x_2^2 +t}}^{\sqrt{x_2^2 +t+3 \eps}} \left(\frac{u}{x_1-\sqrt{x_2^2 +t}}\right)^2 \big(x_1-\sqrt{x_2^2 +t}\big)^2|x|^2\, dx_1.
\end{split}\end{equation}
For $\sqrt{x_2^2 +t} \leq x_1 \leq \sqrt{x_2^2 +t+3 \eps}$ we have $|x_1-\sqrt{x_2^2 +t}| \leq \sqrt{x_2^2 +t+3 \eps}- \sqrt{x_2^2 +t}$ and so 
\[
  \big(x_1-\sqrt{x_2^2 +t}\big)^2|x|^2 \leq 9 \eps^2 \frac{x_1^2 +x_2^2}{\big(\sqrt{x_2^2 +t+3 \eps}+\sqrt{x_2^2 +t}\big)^2} \leq 9 \eps^2.
\]
 Noting  that $u|_{x_1=\sqrt{x_2^2+t}}=0$ and using this in \eqref{E5} together with the  Hardy inequality \eqref{ResP1} we obtain
\begin{equation} \label{HH1}
  \eps^{-2}\int_{\sqrt{x_2^2 +t}}^{\sqrt{x_2^2 +t+3 \eps}} u^2 |x|^2\, dx_1 \leq 9 C_0 \int_{\sqrt{x_2^2 +t}}^{\sqrt{x_2^2 +t+3 \eps}} \left(\frac{\partial u}{\partial x_1}\right)^2 \, dx_1.
\end{equation}
Substituting this in \eqref{E4} yields
\begin{equation} \label{E6}
  \|(\nabla\theta_\eps)u\|_{\cQ_\eps^{c} \cap (t \geq 0)}^2 \leq c \| \nabla u\|_{\cQ_\eps^{c} \cap (t \geq 0)}^2,
\end{equation}
with a suitable constant $c$ independent of $\eps$. For $t \in [-\delta,0]$ we use very similar arguments, but parameterize over $x_1$, noting that $3 \eps \leq \phi(x,t)\leq 0$ iff $x_1^2 -t-3 \eps \leq x_2^2 \leq x_1^2 -t $, we obtain
\begin{equation} \label{HH2}
 \|(\nabla\theta_\eps)u\|_{\cQ_\eps^{c} \cap (t \leq 0)}^2  \leq c \eps^{-2}\int_{-\delta}^0 \int_{-\delta}^\delta \int_{[x_1^2 -t-3\eps]_+^\frac12}^{\sqrt{x_1^2 -t}} u^2 |x|^2\, dx_2 \, dx_1 \, dt.
\end{equation}
For the inner integral we have 
\begin{equation} \label{HH3}  \eps^{-2}\int_{[x_1^2 -t-3\eps]_+^\frac12}^{\sqrt{x_1^2 -t}} u^2 |x|^2\, dx_2
 = \eps^{-2}\int_{[x_1^2 -t-3\eps]_+^\frac12}^{\sqrt{x_1^2 -t}} \left| \frac{u}{\sqrt{x_1^2 -t} -x_2}\right|^2 \big(\sqrt{x_1^2 -t} -x_2\big)^2 |x|^2\, dx_2,
\end{equation}
and for $[x_1^2 -t-3\eps]_+^\frac12 \leq x_2 \leq\sqrt{x_1^2 -t}$ we have
\begin{equation} \label{HH4}
  \big(\sqrt{x_1^2 -t} -x_2\big)^2 |x|^2 \leq 9 \eps^2 \frac{x_1^2+x_2^2}{\big(\sqrt{x_1^2 -t}+[x_1^2 -t-3\eps]_+^\frac12\big)^2} \leq 18\eps^2.
\end{equation}
By the same arguments as above we obtain $\|(\nabla\theta_\eps)u\|_{\cQ_\eps^{c} \cap (t \leq 0)}^2 \leq c \| \nabla u\|_{\cQ_\eps^{c} \cap (t \leq 0)}^2$, with $c$ independent of $\eps$. Combining this with the result \eqref{E6} we conclude that \eqref{mainestimate} holds for $i=0$. The case  of domain merging or splitting for $d=3$ can be handled in a very similar way, cf. Appendix~\ref{AppB}.

\smallskip
\noindent\textbf{\small Hole through the domain.}
The case  of creation or vanishing of a hole through the domain (case 3c)) can be treated similarly.  Details are given in Appendix~\ref{AppB}.\quad
\end{proof}
\smallskip

\begin{remark} \label{RemdiscussionHoles} \rm   We explain why the cases 2c) and 3d) are excluded from the statement of Lemma~\ref{lemA}. 
The triangle inequality \eqref{aux281} implies that in order to have $\lim\limits_{\eps \to 0}\|u- u_\eps\|_H=0$  the term $\|(\nabla{\theta}_\eps)u\|_{\cQ_\eps}$ should vanish for $u \in H_0$ and $\eps \to 0$. The estimate \eqref{mainestimate} for $i=0$ is critical in this regard.
Let $d=2$ and consider the creation of the hole scenario, case 2c). Similar arguments apply for $d=3$. We use   polar coordinates $(r, \psi)$ around the critical point $x=0$. Consider the evolving domain $\Omega(t)=\{\, x \in \R^2\,|\, t \leq r^2 \leq 2\,\}$, $t \in [-1,1]$ with $t_c=0$. 
%by  Two $\eps$-narrow strips of $\Omega(t)$ in the proximity of the critical point are given in  polar coordinates $x=r (\cos \psi, \sin \psi)$ by 
The spatial slices of ${\rm supp}(|\nabla \theta_\eps|)$, cf. \eqref{distance}, are contained in a strip $S(t) \subset \Omega(t)$:
	\begin{align*}
%\Sin(t) = \{\, x~|~ t\leq r^2 \leq t+3\eps \, \} \quad \text{and}\quad 
  {\rm supp}\big(\|\nabla \theta_\eps(\cdot, t)|\big)\subset S(t) := \{\, x~|~ t+\eps\leq r^2 \leq t+3\eps \, \} .   
\end{align*}
 Note that this support has  a uniform width $\sim \eps$  is space-time, but that the width of the spatial slices depends on $\eps$ and $t$. 
Without loss of generality we may assume that $\cO_0$ is a space--time box. Then for  the term at the left-hand side of \eqref{mainestimate} we  have 
\[
 \|(\nabla{\theta}_\eps)u\|_{\cO_0 \cap \cQ_\eps}^2= \int_{-\delta}^\delta\|(\nabla{\theta}_\eps)u\|_{S(t)}^2\,dt 
\]
for some  $\delta>0$. 
In $S(t) $ it holds that $|\nabla \theta_{\eps}|=|\nabla h_{\eps}(-\phi)| \sim \frac{r}{\eps}$ and so 
\begin{equation} \label{A4}
\|(\nabla{\theta}_\eps)u\|_{\cO_0 \cap \cQ_\eps}^2\approx
\int_{-\delta}^\delta\left\|\frac{r}{\eps}u\right\|_{L^2(S(t))}^2\,dt.
\end{equation}
In the proof of Lemma~\ref{lemA} the key result \eqref{mainestimate} in a neighborhood of the critical point is derived  (essentially)
based on  
	\begin{equation}\label{aux652}
\left\|\frac{r}{\eps}u\right\|_{L^2(S(t))} \le C \|u\|_{H^1(S_{\rm ex}(t))},
	\end{equation}
with $S_{\rm ex}(t):=	\{\, x~|~ t\leq r^2 \leq t+3\eps \, \}$ a strip that is (slightly) larger than $S(t)$ and with a constant $C$ independent of $\eps$ and $t$. 
The estimate \eqref{aux652}, however, fails for the case 2c)  as can be seen from the following. 
Take $t \in (0,1]$ and $u(x)=\ln \left( \frac{r}{\sqrt{t}}\right)$, $r \geq \sqrt{t}$. Note that $u \in H^1(S_{\rm ex}(t))$ and satisfies $u(x)=0$ if $|x|=\sqrt{t}$. Restrict to $\frac{t}{\eps} \ll 1$ and introduce the notation $\xi:=\eps/t$. straightforward computations yield:
\begin{align*}
	\left\|\frac{r}{\eps}u\right\|_{L^2(S(t))}^2 & = \int_0^{2 \pi} \xi^{-2} \int_{\sqrt{1+\xi}}^{\sqrt{1+3\xi}} y^3 \ln^2y \, dy \, d \psi  \sim \xi^{-2} \big( \xi^2 \ln^2 \xi) = \ln^2 \xi \quad \text{for}~ \xi \to \infty, \\
	\left\|\nabla u\right\|_{L^2(S_{\rm ex}(t))}^2 & = \int_0^{2 \pi} \int_{\sqrt{t}}^{\sqrt{t+3\eps}} \left(\frac{\partial u}{\partial r}\right)^2  r \, dr \, d\psi = \int_0^{2 \pi}  \int_{\sqrt{1}}^{\sqrt{1+3\xi}} \frac{1}{y} \, d y \, d \psi \sim\ln \xi \quad \text{for}~ \xi \to \infty.
\end{align*}
It also holds $\|u\|_{H^1(S_{\rm ex}(t))}\le C \|\nabla u\|_{L^2(S_{\rm ex}(t))}$ with $C$ independent of $\xi$.
Hence, a uniform (in $\eps$ and $t$) estimate as in \eqref{aux652} cannot hold. 
\end{remark}
\smallskip

\begin{lemma} \label{CorA} Under the same assumptions as in Lemma~\ref{lemA}, we have $\lim\limits_{\eps \to 0} \frac{\|u\|_{\cQ_\eps}^2}{\eps}=0$ for all $u \in H_0$.
\end{lemma}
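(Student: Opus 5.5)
The plan is to recycle the proof of Lemma~\ref{lemA}. There the key intermediate result was the estimate \eqref{mainestimate}, $\|(\nabla\theta_\eps)u\|_{\cO_i\cap\cQ_\eps}\le C\|\nabla u\|_{U_\eps}$ for $u\in C_c^\infty(\cQ)$, with $\mathrm{meas}(U_\eps)\to0$. In all the local computations the quantity actually bounded was $\eps^{-2}\int |\nabla\phi|^2u^2$, or $\eps^{-2}\|u\|^2$ away from the critical point. Here we want the weaker weight $\eps^{-1}$ but without the factor $|\nabla\phi|^2$, which degenerates at $(0,0)$. As in Lemma~\ref{lemA}, I first reduce to $u\in C_c^\infty(\cQ)$ by Theorem~\ref{densityH0}. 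For that I need a uniform bound
\[
\eps^{-1}\|u\|_{\cQ_\eps}^2\le C\,\|\nabla u\|_{U_\eps}^2 + o(1)\|u\|_H^2
\]
for smooth $u$. The error term is handled exactly as before: $\eps^{-1}\|u-u_k\|^2\le \eps^{-1}\|u-u_k\|_H^2\to0$ as $k\to\infty$ with $\eps$ fixed.

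\textbf{Regular part.} I use the same finite cover $\{\cO_i\}$. For $i\ge1$, estimate \eqref{Aux1} already gives $\eps^{-2}\|u\|^2_{\cO_i\cap\cQ_\eps}\le c\|\nabla u\|^2_{U_\eps}$. Hence $\eps^{-1}\|u\|^2_{\cO_i\cap\cQ_\eps}\le c\,\eps\|\nabla u\|_{U_\eps}^2\to0$, which is even better than needed.

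\textbf{Critical part $\cO_0$.} I pass to the normal form \eqref{normalform2}. I then write $\cO_0\cap\cQ_\eps\subset\cQ^c_\eps=\{-3\eps\le\phi\le0\}$ and split it into two pieces: the part where $|x|^2\ge\eps$ and the small ball-like region $B_\eps=\{(x,t)\in\cQ^c_\eps: |x|^2\le\eps\}$.

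On the first piece, $|\nabla\phi|^2\sim|x|^2\ge c\eps$. This gives $\eps^{-1}u^2\le C\eps^{-2}|\nabla\phi|^2u^2$, and the Hardy-based bounds from the proof of Lemma~\ref{lemA} ((E1)--(HH4) and Appendix~\ref{AppB}) yield $\le C\|\nabla u\|^2_{\cQ^c_\eps}$.

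On $B_\eps$ I argue differently. The region has $|t|\lesssim\eps$ and $|x|\lesssim\sqrt\eps$. In each slice $\Omega(t)\cap\{|x|^2\le\eps\}$, every point lies within distance $\lesssim\sqrt\eps$ of $\Gamma(t)$ along the 1D lines used before (radial lines, or $x_1$/$x_2$ lines). So the 1D Poincaré/Hardy estimate, Lemma~\ref{1DPoincare} with $p=0$ or $p=d-1$, gives
\[
\int u^2 \le C\,\eps\int|\nabla u|^2
\]
along those lines, with an enlarged set $U_\eps$ of measure $\to0$. Thus $\eps^{-1}\|u\|^2_{B_\eps}\le C\|\nabla u\|^2_{U_\eps}$.

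\textbf{Conclusion.} Summing over $i$ gives $\eps^{-1}\|u\|_{\cQ_\eps}^2\le C\|\nabla u\|^2_{U_\eps}$ for all $u\in C_c^\infty(\cQ)$. The density argument, together with absolute continuity of the integral ($\mathrm{meas}(U_\eps)\to0$), then gives the claim.

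The main obstacle is the region near $(0,0)$. One must check, for each scenario (island, splitting/merging, hole through the domain), that the integration lines from points of $B_\eps$ reach $\Gamma(t)$ within length $O(\sqrt\eps)$ while staying in a set of vanishing measure. The potential difficulty is the splitting case, near the neck at $t\le0$, where the distance to the boundary is $\sqrt{x_1^2-t}\lesssim\sqrt\eps$. That bound is still fine there, so I expect the estimate to go through in all admitted cases.
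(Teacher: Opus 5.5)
Your argument is correct. It shares the paper's skeleton: reduction to $u\in C_c^\infty(\cQ)$ by density, the same cover, the regular patches via \eqref{Aux1}, and the line-wise Hardy inequality of Lemma~\ref{1DPoincare} near $(0,0)$.

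The difference lies in the critical patch. The paper does not split $\cQ_\eps^{c}$. It reruns the computations \eqref{B2}, \eqref{E5}, \eqref{HH3} with the factor $|x|^2$ dropped, using the fact that along the chosen lines the squared distance to $\Gamma(t)$ is at most $3\eps$ on the \emph{whole} strip $\{-3\eps\le\phi\le 0\}$. For example, $(\sqrt t-r)^2\le 3\eps$ for $[t-3\eps]_+\le r^2\le t$, and $(x_1-\sqrt{x_2^2+t})^2\le 3\eps$. The $O(\eps^2)$ bound on $(b-x)^2|x|^2$ used in Lemma~\ref{lemA} is thus directly replaced by the bound $3\eps$ on $(b-x)^2$.

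Your decomposition into $\{|x|^2\ge\eps\}$ and $B_\eps$ works differently. On the bulk, $|\nabla\phi|^2=4|x|^2\ge 4\eps$, so you can quote the weighted bound $\eps^{-2}\int_{\cQ_\eps^{c}}|x|^2u^2\le C\|\nabla u\|^2_{\cQ_\eps^{c}}$ from Lemma~\ref{lemA} verbatim. On $B_\eps$, where $|t|\le 4\eps$, you need only the trivial length bound $\lesssim\sqrt\eps$. The cost is an extra case distinction, which the paper's observation shows is unnecessary: the $O(\sqrt\eps)$ line length holds throughout the strip, not only near the origin.

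The obstacle you flag is also harmless. Moving toward $\Gamma(t)$ along these lines increases $\phi$ monotonically up to $0$, so the segments stay in the same strip $\{-3\eps\le\phi\le 0\}$ used in Lemma~\ref{lemA}.

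Two cosmetic points:
\begin{itemize}
\item In case 3b) with $t\le 0$ the lines are radial in the $(x_2,x_3)$-plane, so the weight is $r$, i.e.\ $p=1$ rather than $p=d-1$. This does no harm, since Lemma~\ref{1DPoincare} holds for every $p\ge 0$.
\item The $o(1)\|u\|_H^2$ term in your first display is not needed.
\end{itemize}
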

\begin{proof}
We use the notation introduced in the proof of Lemma~\ref{lemA}. 
By the same density argument as used below \eqref{mainestimate}, it suffices to prove
\begin{equation} \label{suffi}
	\varepsilon^{-1}\|u\|_{\cO_i \cap \cQ_\varepsilon}^2 
	\le C \|\nabla u\|_{U_\varepsilon}^2 
	\qquad \text{for all } u \in C_c^\infty(\cQ).
\end{equation}

For the regular sets $\cO_i$, $1 \le i \le N$, this follows from \eqref{Aux1}. 
For the neighborhood $\cO_0$ of the critical point, we distinguish the same cases as in the proof of Lemma~\ref{lemA}. 

First consider the degenerating island. We need to estimate the integral in \eqref{E1} without the factor $|x|^2$. 
This yields \eqref{B2} without the factor $r^2$. 
One checks that for $r \in \big[ [0,t-3\varepsilon]_+^{1/2}, \sqrt{t} \big]$ the inequality 
$(\sqrt{t}-r)^2 \le 3\varepsilon$ holds. 
Proceeding as in the proof of Lemma~\ref{lemA}, we obtain \eqref{suffi}. 

Next consider the domain splitting case with $t \ge 0$, which leads to \eqref{E4} without the factor $|x|^2$. 
As in \eqref{E5}, one verifies that for 
$x_1 \in \big[\sqrt{x_2^2+t}, \sqrt{x_2^2+t+3\varepsilon}\big]$ the estimate 
$\left(x_1-\sqrt{x_2^2+t}\right)^2 \le \varepsilon$ holds. 
Following the same arguments as in the proof of Lemma~\ref{lemA}, we again obtain \eqref{suffi}. 
The case $t \le 0$, cf.~\eqref{HH3}--\eqref{HH4}, can be treated analogously. 
The same arguments also apply to the cases of domain merging/splitting for $d=3$ and to the hole-through-the-domain case.\quad 
\end{proof}
\smallskip

We are now prepared to study  properties of the space $W$.  We start by proving the density of smooth functions. 
To this end, we first show that functions with spatially compact support are dense in $W$.

\begin{lemma}\label{L2} 
Consider a space-time domain with a critical point as in  classification \ref{Class}, except for the cases 2c) and 3d).
For $u\in W$, there exists a sequence $(w_n)_{n \geq 1} \subset W$ with $\text{\rm dist}\big(\text{\rm supp}(w_n),\Gamma_\cQ\big) > 0$ for all $n$ and  $ w_n\to u$ in $W$.  
\end{lemma}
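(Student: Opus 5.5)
We take $w_\eps:=u_\eps=\theta_\eps u$. By \eqref{distance}, $\operatorname{dist}(\operatorname{supp}(w_\eps),\Gamma_\cQ)\ge c_0\eps>0$. Lemma~\ref{lemA} gives $w_\eps\to u$ in $H$. What remains is to show $w_\eps\in W$ and $\|\partial_t(u-w_\eps)\|_{H^{-1}}\to 0$ as $\eps\to0$.

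For the time derivative we use the product rule in distributional form. For $\xi\in C_c^\infty(\cQ)$ we have $\theta_\eps\xi\in C_c^\infty(\cQ)$, since $\theta_\eps$ is smooth on a neighborhood of $\operatorname{supp}\xi$. Hence
\[
\langle\partial_t(\theta_\eps u),\xi\rangle=-\int_\cQ u\,\partial_t(\theta_\eps\xi)\,dq+\int_\cQ u\,\xi\,\partial_t\theta_\eps\,dq
=\langle\partial_t u,\theta_\eps\xi\rangle+\int_\cQ (\partial_t\theta_\eps)\,u\,\xi\,dq .
\]
Consequently
\[
\langle\partial_t(u-w_\eps),\xi\rangle=\langle\partial_t u,(1-\theta_\eps)\xi\rangle-\int_\cQ(\partial_t\theta_\eps)\,u\,\xi\,dq=:I_1+I_2 .
\]
Both terms must be bounded by $o(1)\|\xi\|_H$, uniformly in $\xi$.

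\textbf{Term $I_2$.} We have $|\partial_t\theta_\eps|\le c\,\eps^{-1}$, supported in $\cQ_\eps$, and $|\phi_t|$ is bounded. Cauchy--Schwarz gives
\[
|I_2|\le c\,\eps^{-1/2}\|u\|_{\cQ_\eps}\;\eps^{-1/2}\|\xi\|_{\cQ_\eps}.
\]
By Lemma~\ref{CorA}, $\eps^{-1}\|u\|_{\cQ_\eps}^2\to0$. It remains to bound $\eps^{-1}\|\xi\|_{\cQ_\eps}^2\le C\|\nabla\xi\|_{\cQ}^2$ uniformly in $\eps$ for $\xi\in C_c^\infty(\cQ)$. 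This is exactly estimate \eqref{suffi} from the proof of Lemma~\ref{CorA}, applied to $\xi$. That estimate holds for all $C_c^\infty(\cQ)$ functions with a constant independent of $\eps$, so $|I_2|\le o(1)\|\xi\|_H$.

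\textbf{Term $I_1$.} We have $|I_1|\le\|\partial_t u\|_{H^{-1}}\|(1-\theta_\eps)\xi\|_H$, but $\|(1-\theta_\eps)\xi\|_H$ is not small uniformly in $\xi$, so a different argument is needed. This is the main obstacle: we must show that the functional $\partial_t u$, restricted to test functions concentrated in the strip $\cQ_\eps$, has small norm. Our plan is as follows.

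\begin{enumerate}
\item Show that multiplication $\xi\mapsto\theta_\eps\xi$ is bounded on $H_0$ uniformly in $\eps$. We have $\|\theta_\eps\xi\|_H\le\|\xi\|_H+\|(\nabla\theta_\eps)\xi\|_\cQ$. By \eqref{mainestimate} applied to $\xi$ (proved in Lemma~\ref{lemA} for all $C_c^\infty(\cQ)$ functions), the last term is at most $C\|\nabla\xi\|_\cQ$. So $\|\theta_\eps\xi\|_H\le C\|\xi\|_H$.
\item Consequently $\|(1-\theta_\eps)\xi\|_H\le C\|\xi\|_H$ as well, and the adjoint maps $\ell\mapsto\ell\circ(1-\theta_\eps)$ are uniformly bounded on $H_0^{-1}=(H_0)'$.
\item Conclude by a density/Banach--Steinhaus argument. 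Approximate $\partial_t u$ in $H_0^{-1}$ by functionals $g\in L^2(\cQ)$; these are dense in $H_0^{-1}$ because $H_0$ is reflexive and $L^2$ embeds injectively into its dual. For such $g$,
\[
|(g,(1-\theta_\eps)\xi)_\cQ|\le\|g\|_{\cQ_\eps}\|\xi\|_\cQ\to0
\]
uniformly in $\|\xi\|_H\le1$, since $\operatorname{meas}(\cQ_\eps)\to0$. Combined with the uniform bound from step 2, this gives $\sup_{\|\xi\|_H\le1}|I_1|\to0$.
\end{enumerate}

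Together these show $w_\eps\in W$ and $\|\partial_t(u-w_\eps)\|_{H^{-1}}\to0$. Hence $w_n:=w_{\eps_n}$ for any sequence $\eps_n\to0$ has the required properties. The restriction excluding cases 2c) and 3d) enters only through Lemmas~\ref{lemA} and \ref{CorA} and the estimates \eqref{mainestimate} and \eqref{suffi}.
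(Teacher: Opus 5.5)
Your proof is correct, but it takes a different route from the paper.

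The paper also takes $u_n=\theta_{\eps_n}u$ and uses the same product-rule identity. However, it only shows $\langle\partial_t u_n-\partial_t u,\xi\rangle\to0$ for each fixed $\xi\in C_c^\infty(\cQ)$. It then asserts weak convergence $\partial_t u_n\rightharpoonup\partial_t u$ in $H_0^{-1}$ and invokes the Banach--Saks theorem. So its approximating sequence consists of Ces\`aro means $\frac1k\sum_{n\le k}\theta_{\eps_n}u$ of a subsequence.

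You instead prove strong convergence of $\theta_\eps u$ itself, uniformly in the test function:
\begin{itemize}
\item The $\partial_t\theta_\eps$ term is handled by Lemma~\ref{CorA} for $u$ together with \eqref{suffi} applied to $\xi$.
\item The term $\langle\partial_t u,(1-\theta_\eps)\xi\rangle$ is handled by the uniform $H_0$-boundedness of multiplication by $\theta_\eps$, obtained from \eqref{mainestimate}. You combine this with approximating $\partial_t u$ by $L^2$-functionals and with $\operatorname{meas}(\cQ_\eps)\to0$.
\end{itemize}

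Your route avoids Banach--Saks and averaging. It also makes explicit the uniform bound $\sup_\eps\|\partial_t(\theta_\eps u)\|_{H^{-1}}<\infty$. The paper's step from convergence on the dense set $C_c^\infty(\cQ)$ to weak convergence in $H_0^{-1}$ tacitly needs this bound, and it is where the real content lies. For fixed $\xi$ the paper's convergence is in fact trivial: $\phi\le -m<0$ on $\operatorname{supp}\xi$, so $\theta_\eps\equiv1$ there once $3\eps\le m$. The paper's version is shorter, but it leaves that uniformity unstated.

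One small correction to your step 3. Density of the $L^2(\cQ)$-functionals in $H_0^{-1}$ follows from injectivity of $H_0\hookrightarrow L^2(\cQ)$. By reflexivity, no nonzero element of $H_0=H_0''$ then annihilates the image of $L^2(\cQ)$. It does not follow from injectivity of $L^2(\cQ)\to H_0^{-1}$, which is the reason you wrote.
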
     
\begin{proof} Take $u \in W$ and consider the cut off with $\theta_\eps$ defined in \eqref{cutoff}.  We note that $\theta_\eps u\in W$, since $\theta_\eps\in C^\infty(\overline\cQ)$. Let $\eps_n=\eps_0 2^{-n}$, $n \in \mathbb{N}$, with  $\eps_0 >0$ sufficiently small.  Define $u_n:=\theta_{\eps_n} u$.  
Lemma~\ref{lemA} implies  $u_n\to u$ in $H$.
	By  definition, we have for any $\xi\in C^\infty_c(\cQ)$: 
	\begin{equation}\label{aux395}\begin{split}
			\langle \partial_t u_n,\xi\rangle & = -\int_{\cQ} \theta_{\eps_n} u\,\partial_t \xi\, dq =
			-\int_{\cQ}  u\,\partial_t (\xi\theta_{\eps_n})\, dq + \int_{\cQ}  u\, \xi\, \partial_t \theta_{\eps_n}\, dq  \\ & =\langle \partial_t u,\theta_{\eps_n} \xi\rangle+ \int_{\cQ}  u\, \xi\, \partial_t \theta_{\eps_n}\, dq.
	\end{split} \end{equation}
	Note that 
	\[
	\left| \langle \partial_t u,\theta_{\eps_n} \xi-\xi\rangle \right|\le \|\partial_t u\|_{H^{-1}}\|\theta_{\eps_n} \xi-\xi\|_H \le \|u\|_W\|\theta_{\eps_n} \xi-\xi\|_H\to 0\quad \text{for}~n \to \infty,
	\]
where we used Lemma~\ref{lemA} to claim $\|\theta_{\eps_n} \xi-\xi\|_H\to0$.	Furthermore,  with the result in Lemma~\ref{CorA} we obtain
	\[
	\left| \int_{\cQ}  u\, \xi\, \partial_t \theta_{\eps_n}\, dq \right| = \left| \int_{\cQ}  u\, \xi\, h_{\eps_n}' \partial_t \phi \,dq \right|
	\le c\,\eps_n^{-1} \int_{\cQ_{\eps_n}}  |u\, \xi|\,dq \le \eps_n^{-1}  \|u\|_{\cQ_{\eps_n}} \|\xi\|_{\cQ_{\eps_n}} \to 0 ~\text{for}~n \to \infty. 
	\]
Thus we conclude
	\[
	\lim_{n\to \infty}\langle \partial_t u_n - \partial_t u ,\xi\rangle =0  \quad \text{for all}~\xi\in C^\infty_c(\cQ).
	\]
	Using this and the density of $C^\infty_c(\cQ)$ in $H_0$ we  get weak convergence $ \partial_t u_n \rightharpoonup \partial_t u$ in $H_0^{-1}$. By the Banach--Saks theorem there exists a subsequence of $(u_n)_{n \geq 1}$, which we also denote by $(u_n)_{n \geq 1}$, such that the Cesaro means of $\{(\partial_tu_n)\}_{n \geq 1}$ converge strongly in $H_0^{-1}$, i.e,
	\[
	\lim_{k \to \infty} \frac{1}{k} \sum_{n=1}^k\partial_t u_n =\partial_t u  \quad \text{strongly in}~ H_0^{-1}.
	\]
	Define $w_k:=\frac{1}{k} \sum_{n=1}^k u_n $. From $u_n\to u$ in $H$ it follows that $w_k\to u$ in $H$. We also have $\partial_t w_k\to \partial_t u$ in $H_0^{-1}$. Hence $w_k\to u$ in $W$. From $\text{\rm dist}\big(\text{\rm supp}(u_n),\Gamma_\cQ\big) > 0$ for all $n$ it follows that $\text{\rm dist}\big(\text{\rm supp}(w_k),\Gamma_\cQ\big) > 0$ for all $k$.\\
\end{proof}

\begin{lemma}\label{L3} Consider a space-time domain with critical points as in  classification \ref{Class}, except for the cases 2c) and 3d). The subspace of functions from $C^\infty(\overline \cQ)$ vanishing on $\Gamma_\cQ$ is  dense in $W$. 
\end{lemma}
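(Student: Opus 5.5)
By Lemma~\ref{L2} it suffices to approximate, in the $W$-norm, a fixed $u\in W$ with $\operatorname{dist}(\operatorname{supp}(u),\Gamma_\cQ)=:2\eta>0$. The approximants will be smooth functions supported strictly inside $\cQ$ in the spatial direction; such functions vanish on $\Gamma_\cQ$ automatically. The plan is to extend $u$ by zero and mollify, following step c) of the outline.

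\textbf{Step 1: extension by zero.} Let $\tilde u$ be $u$ extended by zero to the cylinder $\mathcal{C}:=B\times(t_0,T)$, where $B\supset\Omega(t)$ for all $t$ is a fixed ball. Since $\operatorname{supp}(u)$ stays at distance $2\eta$ from $\Gamma_\cQ$, we have $\tilde u\in L^2(t_0,T;H_0^1(B))$ with $\|\tilde u\|_{L^2(H^1)}=\|u\|_H$.

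For the time derivative, fix a cut-off $\chi\in C^\infty(\R^{d+1})$ with $\chi=1$ on $\operatorname{supp}(u)$ and $\chi=0$ within distance $\eta$ of $\Gamma_\cQ$; it has bounded derivatives. For $\xi\in C_c^\infty(\mathcal{C})$ we compute
\[
-\int_{\mathcal{C}}\tilde u\,\partial_t\xi\,dq=-\int_\cQ u\,\partial_t(\chi\xi)\,dq+\int_\cQ u\,\xi\,\partial_t\chi\,dq .
\]
The first term equals $\langle\partial_t u,\chi\xi\rangle$ and is bounded by $C\|u\|_W\|\xi\|_{L^2(H^1)}$. The second term is bounded by $C\|u\|_{\cQ}\|\xi\|_{L^2}$.

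There is one subtlety: $\chi\xi$ need not have compact support near the time endpoints $t_0,T$. This is harmless, because $\xi$ itself is compactly supported in time, so $\chi\xi\in C_c^\infty(\cQ)$. Hence $\partial_t\tilde u\in L^2(t_0,T;H^{-1}(B))$, with norm bounded by $C\|u\|_W$.

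\textbf{Step 2: mollification in the cylinder.} By the classical density result for the cylindrical space $\{v\in L^2(H_0^1(B)):\partial_tv\in L^2(H^{-1}(B))\}$ we approximate $\tilde u$ by smooth functions. I would use the standard construction: first a small time dilation toward the interval midpoint (or a reflection) so that the function is defined beyond $[t_0,T]$, then space-time convolution with a mollifier $\rho_\delta$ with $\delta<\eta/2$.

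Because $\tilde u$ vanishes in the $2\eta$-neighbourhood of $\Gamma_\cQ$, the mollified function $v_\delta$ vanishes in the $\eta$-neighbourhood of $\Gamma_\cQ$, provided the dilation is close enough to the identity that supports move by less than $\eta/2$. This requires care near $t=t_0,T$, but there $\Gamma_\cQ$ is smooth and transversal, so a small dilation is fine. Thus the restriction $v_\delta|_{\cQ}$ lies in $C^\infty(\overline\cQ)$ and vanishes on $\Gamma_\cQ$.

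\textbf{Step 3: convergence in the $W$-norm.} We have $v_\delta\to\tilde u$ in $L^2(H^1)$, and $\partial_t v_\delta\to\partial_t\tilde u$ in $L^2(H^{-1}(B))$. It remains to compare norms. For $\xi\in C_c^\infty(\cQ)$, extended by zero to $\mathcal{C}$,
\[
\langle\partial_t(v_\delta-u),\xi\rangle\le\|\partial_t(v_\delta-\tilde u)\|_{L^2(H^{-1}(B))}\,\|\xi\|_{L^2(H^1_0(B))}\le\|\partial_t(v_\delta-\tilde u)\|_{L^2(H^{-1}(B))}\,\|\xi\|_H .
\]
So $\|v_\delta-u\|_W\to0$, which proves the lemma.

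\textbf{Main obstacle.} I expect Step 1 to be the delicate point, namely verifying that the zero extension has a time derivative in the dual space of the cylinder. This is exactly where the positive distance to $\Gamma_\cQ$ supplied by Lemma~\ref{L2} is used: it makes the cut-off $\chi$ have $\varepsilon$-independent bounds. Near the critical point $(0,0)$ no geometric information is needed, since $u$ already vanishes near $\Gamma_\cQ$ there.
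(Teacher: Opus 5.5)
Your proof is correct and follows the paper's argument: reduce via Lemma~\ref{L2} to $\operatorname{dist}(\operatorname{supp}(u),\Gamma_\cQ)>0$, extend by zero to a cylinder, use a cut-off to show the extension has its time derivative in $L^2(H^{-1})$, and approximate in the cylinder. The one difference is the final localization: the paper takes an arbitrary smooth cylindrical approximant $\bar u_\delta$, restores vanishing on $\Gamma_\cQ$ by setting $u_\delta=\kappa_\eps\bar u_\delta$, and bounds the error with the multiplier estimate $\|gw\|_{\widehat W}\le c_g\|w\|_{\widehat W}$ together with $(1-\kappa_\eps)\bar u=0$, whereas you get vanishing directly from the support control of your explicit dilation-plus-mollification construction (your appeal to transversality of $\Gamma_\cQ$ at $t_0,T$ is false in the island cases 2a) and 3a), where the critical point lies at a time endpoint, but it is not needed, since the dilation moves supports by less than $\eta/2$ and the triangle inequality already gives the claim).
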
     
\begin{proof} %Here we may follow the argument from~\cite{lions1957problemes}.
 From Lemma~\ref{L2} it follows that it is sufficient to consider  $u\in W$ for which $\epsilon:=\text{dist}(\text{supp}(u),\Gamma_\cQ) > 0$ holds and to show that $u$ can be approximated arbitrary well by smooth functions in $\|\cdot\|_W$. 
 Since $\Omega(t)$ is bounded for all $t\in [-1,1]$, there is a ball $\widehat B\in\R^d$ and a space--time cylinder $\widehat\cQ= \widehat B\times[-1,1]\in\R^{d+1}$ such that $\cQ\subset \widehat\cQ$. For functions defined on $\widehat \cQ$ we introduce the standard Bochner space
\[
\widehat W =\{u\in L^2([-1,1],H^1_0(\widehat B))\,|\,\partial_t u \in L^2([-1,1],H^{-1}(\widehat B))\},
\]
with the corresponding norm. 
For $u\in W$ denote by $\bar u$ its extension by zero to   $\widehat\cQ$.
Since $u\in H_0$ it is straightforward to see that $\bar u\in L^2([-1,1],H^1_0(\widehat B))$
and
\[
\|\bar u\|_{ L^2([-1,1],H^1_0(\widehat B))} =\|u\|_H.
\]
Since $\eps =\text{dist}(\text{supp}(u),\Gamma_\cQ) >0$,  there is a cut-off function $\kappa_\eps= \kappa_\eps(u) \in C^\infty(\overline{\widehat\cQ})$, such that $\text{supp}(\kappa_\eps)\subset \overline \cQ$,  $\text{dist}(\text{supp}(\kappa_\eps),\Gamma_\cQ)\ge \eps/2$ and
$\kappa_\eps=1$ on $\text{supp}(u)$. 
For $\xi \in C_c^\infty(\widehat \cQ)$ we have
\begin{align*}
 \langle \partial_t \bar u, \xi \rangle & = - \int_{\widehat \cQ} \bar u \, \partial_t \xi \, dq =- \int_\cQ u\, \partial_t(\kappa_\eps \xi) \, dq 
 = \langle \partial_t  u, \kappa_\eps \xi \rangle.
\end{align*}
Using this, $\partial_t u \in H_0^{-1}$ and $\|\kappa_\eps \xi\|_H \leq c(\eps) \|\xi\|_{ L^2([-1,1],H^1_0(\widehat B))}$ it follows that $\bar u \in\widehat W$. One easily checks that $\|u\|_W \leq \|\bar u\|_{\widehat W}$ holds.  By a standard  result for the cylindrical space-time domain, we have that for any $\delta>0$ there exists $\bar u_\delta\in C^\infty(\overline {\widehat\cQ})$ %vanishing on  the lateral boundary of $\widehat \cQ$
 such that 
\begin{equation}\label{aux450}
\|\bar u- \bar u_\delta\|_{\widehat{W}}\le \delta.
\end{equation}
Define $ u_\delta= \kappa_\eps \bar u_\delta \in C^\infty(\overline{\widehat\cQ})$.  Note that $\text{supp}(u_\delta)\subset \overline \cQ$ holds and $u_\delta$ vanishes on $\Gamma_\cQ$. For $g \in C^\infty(\overline {\widehat\cQ})$ and  $w \in \widehat{W}$ we have $\|g w\|_{\widehat W} \leq c_g \|w\|_{\widehat W}$, for a suitable constant $c_g$ independent of $w$.  Using this and triangle inequalities we obtain, for  suitable $c_\eps=c_\eps(u) >0$,  
\[
\begin{split}
\|u-u_\delta\|_W&\le 
\|\bar u-u_\delta\|_{\widehat{W}}\le
\|\bar u-\bar u_\delta\|_{\widehat{W}} + \|\bar u_\delta-  u_\delta\|_{\widehat{W}} \le 
\delta + \|\bar u_\delta-  u_\delta\|_{\widehat{W}} \\ &=
\delta + \|(1-\kappa_\eps) \bar u_\delta\|_{\widehat{W}}
=
\delta + \|(1-\kappa_\eps)(\bar u-\bar u_\delta)\|_{\widehat{W}}
\le \delta + c_\eps\|\bar u-\bar u_\delta\|_{\widehat{W}}\\
&\le (1+c_\eps)\delta.
\end{split}
\]
For given $u$ and $\epsilon > 0$ we take $\delta> 0$ such that $(1+c_\eps(u))\delta < \epsilon$ and for the corresponding $\bar u_\delta$ as in \eqref{aux450} we take $u_\delta= \kappa_\eps \bar u_\delta  \in C^\infty(\overline{\cQ})$, which vanishes on $\Gamma_\cQ$ and satisfies $\|u-u_\delta\|_W \leq \epsilon$. This completes the proof.
\end{proof}

The next lemma establishes a Lions--Magenes-type result for $W$. 
%We state and prove it for the case of merging/splitting domains. For the case of a vanishing or emerging island, the lemma and its proof remain valid if the time interval $[-1,1]$ is replaced by $[-1,0)$ and $(0,1]$, respectively.

\begin{lemma}\label{L4} Consider a space-time domain with critical points as in  classification \ref{Class}, except for the cases 2c) and 3d). Let $u\in W$. 
Then for any $t\in[t_0,T]$ the trace of $u$ on $\Omega(t)$ is well defined as an element of $L^2(\Omega(t))$ and 
\begin{equation}\label{AMest}
\|u(t)\|_{\Omega(t)}\le C\, \|u\|_W,
\end{equation}
with some $C>0$ independent of $t$ and $u$.
\end{lemma}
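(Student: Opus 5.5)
By Lemma~\ref{L3}, functions $u \in C^\infty(\overline\cQ)$ with $u|_{\Gamma_\cQ}=0$ are dense in $W$. So I will prove \eqref{AMest} for such smooth $u$, with $C$ independent of $t$ and $u$, and then extend the trace map $u\mapsto u(t)$ by continuity. For fixed $t$ the density argument is standard: if $u_k\to u$ in $W$, then $(u_k(t))$ is Cauchy in $L^2(\Omega(t))$. The limit does not depend on the chosen sequence, and it agrees with the pointwise restriction whenever $u$ is smooth.

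For smooth $u$ vanishing on $\Gamma_\cQ$, the function $u^2$ vanishes on the lateral boundary. I will use the Reynolds transport identity, valid on each regular interval, in the integrated form
\[
\frac{d}{dt}\int_{\Omega(t)} u^2\,dx = \int_{\Omega(t)} 2u\,\partial_t u\,dx + \int_{\Gamma(t)} u^2 V_\Gamma\cdot\bn\,ds = 2\int_{\Omega(t)} u\,\partial_t u\,dx .
\]
The boundary term drops because $u=0$ on $\Gamma(t)$. To avoid any trouble at $t=0$, where $V_\Gamma$ is unbounded, I prefer the space–time divergence form. Let $\chi(s)$ be a smooth cutoff in time, and let $\cQ^{t}:=\cQ\cap\{s<t\}$. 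Applying the divergence theorem on $\cQ^{t}$ to the vector field $(0,\dots,0,\chi u^2)$ gives
\[
\chi(t)\|u(t)\|_{\Omega(t)}^2 = \int_{\cQ^{t}} \big(\chi' u^2 + 2\chi u\,\partial_t u\big)\,dq .
\]
The lateral boundary contributes nothing since $u=0$ there. The boundary $\partial\cQ^t$ is only Lipschitz near the critical point, but this is harmless: $u$ is smooth on $\overline\cQ$, so one may cut out an $\eta$-ball around $(0,0)$ and let $\eta\to0$.

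Now choose $\chi$ with $\chi(t)=1$, $|\chi'|\le 1$, and $\chi=0$ near the endpoint $t_0$ or near $T$, depending on whether $t$ is closer to $T$ or to $t_0$. In the second case integrate over $\{s>t\}$ instead. This removes the initial and final slices. The pairing term has the form $\int_\cQ 2u\,\partial_t(\chi u)\,dq$ (up to the $\chi'$ term), and I bound it as $\langle \partial_t u, 2\chi\mathbf 1_{s<t}u\rangle$.

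This is the main obstacle. The test function $\chi\mathbf 1_{s<t}u$ is discontinuous in time, so it does not lie in $H_0$ directly, and the $H^{-1}$ bound cannot be applied to it as it stands. The remedy is the classical Lions--Magenes argument: replace $\mathbf 1_{s<t}$ by the piecewise linear ramp $\rho_\tau(s)$, which equals $1$ for $s<t-\tau$ and $0$ for $s>t$. Then $\rho_\tau\chi u\in H_0$ with $\|\rho_\tau\chi u\|_H\le\|u\|_H$. Computing $\int_\cQ \rho_\tau\chi\,\partial_t(u^2)\,dq$ by parts gives
\[
\frac1\tau\int_{t-\tau}^t\chi\|u(s)\|^2_{\Omega(s)}\,ds = \int_\cQ\rho_\tau\chi' u^2\,dq+2\langle\partial_t u,\rho_\tau\chi u\rangle .
\]
The right-hand side is bounded by $\|u\|_H^2+2\|\partial_t u\|_{H^{-1}}\|u\|_H\le 2\|u\|_W^2$.

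Finally, let $\tau\to0$. The left side converges to $\|u(t)\|^2_{\Omega(t)}$, by continuity of $s\mapsto\|u(s)\|_{\Omega(s)}$ for smooth $u$. At $t=0$ this is also fine, since $\Omega(s)\to\Omega(0)$ in measure and $u$ is uniformly continuous on $\overline\cQ$. This yields \eqref{AMest} with $C=\sqrt2$, uniformly in $t$. For the island cases 2a) and 3a), the endpoint at which $\Omega(t)$ degenerates needs no cutoff, because the slice there has measure zero.
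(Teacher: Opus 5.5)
Your argument is correct, but it is organized differently from the paper's proof. The paper restricts $u$ to $\cQ(-1,-\tfrac12)$, which is diffeomorphic to a cylinder, and imports the standard Bochner-space trace theorem to obtain $\|u(-1)\|_{\Omega(-1)}\le C\|u\|_W$. For smooth $u$ it then writes $\|u(t)\|^2_{\Omega(t)}=\|u(-1)\|^2_{\Omega(-1)}+2\int_{\cQ(-1,t)}u_tu\,dq$ via Reynolds' transport theorem, and bounds the last term with the restriction estimate \eqref{aux517}. It reaches $t=0$ by continuity, extends to $t>0$, and concludes by density (Lemma~\ref{L3}).

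Your time cutoff $\chi$ vanishes at the far endpoint, and you choose the direction of integration according to where $t$ lies. This removes the endpoint slice entirely, so you need neither the diffeomorphism to a cylinder nor the cylindrical trace theorem. Your space--time integration by parts, which only uses that a Lipschitz function vanishing on $\GQ$ and at the relevant end integrates $\partial_t$ to zero, handles the passage through the critical time explicitly. The paper only remarks that the Reynolds-based estimate "can be extended for $t>0$". The paper's route is shorter given the standard cylindrical theory. Yours is self-contained, treats $t<0$ and $t>0$ symmetrically, and gives an explicit constant.

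One correction: the ``main obstacle'' you identify is not real. The space $H$ involves only spatial derivatives, so $\chi\mathbf{1}_{s<t}u$ has weak spatial gradient $\chi\mathbf{1}_{s<t}\nabla u\in L^2(\cQ)$. It is also the $H$-limit of $\rho_\tau\chi u$ as $\tau\to0$, so by continuity of the trace it has zero trace on $\GQ$. Hence it lies in $H_0$, and you may pair directly:
\[
2\int_{\cQ^t}\chi u\,\partial_tu\,dq=2\langle\partial_tu,\chi\mathbf{1}_{s<t}u\rangle\le 2\|\partial_tu\|_{H^{-1}}\|u\|_H .
\]
This uses that, for smooth $u$, $\langle\partial_tu,v\rangle=\int_\cQ\partial_tu\,v\,dq$ for $v\in H_0$, which follows from Theorem~\ref{densityH0}. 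This is exactly what the paper does in the guise of \eqref{aux517}: since $C_c^\infty(\cQ(-1,t))\subset C_c^\infty(\cQ)$, the dual norm on the subdomain is bounded by the one on $\cQ$. Your ramp $\rho_\tau$ is harmless, since it trades the divergence theorem with a top slice for an interior integration by parts plus continuity of $s\mapsto\|u(s)\|_{\Omega(s)}$, but it is not needed.
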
     
\begin{proof}  For $t_0 \leq t_1< t_2\leq T$, define $\cQ(t_1,t_2)= \bigcup\limits_{s \in (t_1,t_2)} \Omega(s) \times \{s\}$ and  
$H_0(t_1,t_2)=\{u\in L^2(\cQ(t_1,t_2))\,|\,  u_{|\Gamma_{\cQ(t_1,t_2)}}=0\,\}$. By the same arguments as in the proof of Theorem~\ref{densityH0} the functions from $C^\infty_c(\cQ(t_1,t_2))$ are dense in $H_0(t_1,t_2)$.
Define the corresponding space $W(t_1,t_2)$ as in \eqref{defW}.  
%It is therefore holds for any smooth $\bar u\in W$, $\bar v\in H$, 
One easily checks that $u \in W$ implies $u_{|\cQ(t_1,t_2)} \in W(t_1,t_2)$ and that (with the restriction also denoted by $u$)
\begin{equation}\label{aux517}
\|u\|_{W(t_1,t_2)} \leq \|u\|_W
\end{equation}
holds. Consider the cases 2b), 3b) or 3c), i.e. $[t_0,T]=[-1,1]$ and introduce  a quasi-space--time cylinder $\cQ(-1,-\tfrac12)$ (for the case of emerging island, with $[t_0,T]=[0,1]$, one considers instead the cylinder $\cQ(\tfrac12,1)$ and makes other obvious modifications). There is a diffeomorphism  between  $\cQ(-1,-\tfrac12)$ and the cylinder $\Omega(-1)\times(-1,-\tfrac12)$. It follows from standard Bochner space properties (\cite[Section 25]{Wloka1987}  or \cite[Section 5.9.2]{Evans}) that the trace (in time) operator $ W(-1,-\tfrac12) \to L^2(\Omega(-1))$ is continuous and  combining this with \eqref{aux517} it follows that 
\begin{equation}\label{aux524}
\|u(-1)\|_{\Omega(-1)}\le C\, \|u\|_W, \quad u \in W,
\end{equation}
holds with a constant $C$ independent of $u$. 
For any smooth $u\in W$ and $t\in(-1,0)$, we obtain using \eqref{aux517}, \eqref{aux524}, the Reynolds transport theorem and $u_{|\partial \Omega(t)}=0$:
\begin{equation}\label{aux529} \begin{split}
\|u(t)\|_{\Omega(t)}^2 & = \|u(-1)\|_{\Omega(-1)}^2 + \int_{-1}^t\frac{d}{dt} \int_{\Omega(t)} u^2\,dx
 \\ & =\|u(-1)\|_{\Omega(-1)}^2 + 2\int_{\cQ(-1,t)} u_tu\,dq\le 
C\, \|u\|_W^2.  
\end{split} \end{equation}
%For the equality above we used the Reynolds transport theorem for the smoothly evolving domain and $u$ vanishing on the boundary. 
Since $u$ is smooth and  the constant $C$  is independent of $t$ and $u$, the estimate \eqref{aux529} also holds for $t=0$ and can be extended for  $t>0$. The claim of the lemma now follows from the density of smooth functions in $W$.
\end{proof}

For a smooth function $f$ with  $f_{|\GQ}=0$ the Stokes theorem gives
\[
(f_t,1)_Q= (\nabla_{(x,t)}\cdot (f\mathbf{e}_t),1)_Q =\int_{\partial Q}
f\mathbf{e}_t\cdot \mathbf{n}_{\partial Q}\, dq =\int_{\Omega(T)} f(s,T)\, ds - \int_{\Omega(t_0)} f(s,t_0)\, ds.
\]
Applying the above identity to $f=uv$ with smooth $u$ and $v$ and  using the density result from Lemma~\ref{L3} we obtain the  integration by parts  identity
\begin{equation} \label{partint}
		\la u_t,v\ra +\la v_t,u\ra   = \int_{\Omega(T)} u(x,T) v(x,T)\, dx - \int_{\Omega(t_0)} u(x,t_0) v(x,t_0)\, dx\quad \text{for all}~~u,v \in W,
\end{equation}
where $u$ and $v$ terms on the right hand side are understood in terms of traces, cf. Lemma~\ref{L4}.
In the case of vanishing or emerging island, the formula \eqref{partint} appears without $\Omega(T)$ or $\Omega(t_0)$ terms, respectively.

\section{Well-posedness} \label{s:wellP} 
Using the properties of the spaces $H_0$ and $W$ derived in Section~\ref{s:spaceW} we now study well-posedness of a class of parabolic problems on the space-time domain $\cQ$. We remind assumptions that were needed up to now and collect them here as
\begin{assumption} \rm The  space-time domains $\cQ$ is defined by the subzero levels of a smooth level set function $\phi$, cf. \eqref{subzero}--\eqref{eq:smooth}. We assume that there is an isolated  nondegenerated critical point $(x_c,t_c)$, cf. \eqref{critpoint}--\eqref{critpoint2}, for which $\frac{\partial \phi}{\partial t}(x_c,t_c) \neq 0$ holds.  We consider all possible cases in   classification \ref{Class}, except for the cases 2c) and 3d).
\end{assumption}
\medskip

The integral identity \eqref{integral} suggests the following weak formulation of the heat equation \eqref{transport}--\eqref{Dirichlet}:
For given $f \in H^{-1}_0$ and $u_0 \in L^2(\Omega(t_0))$, find $u \in W$ such that $u|_{t=t_0}=u_0$ and
\begin{equation}\label{week1}
	\langle u_t, v\rangle + (\nabla u, \nabla v)_\cQ
	= \langle f, v\rangle
	\qquad \forall\, v \in H_0 .
\end{equation}

To show well-posedness of \eqref{week1}, we first transform it into a problem with homogeneous initial condition. This step is not needed in the emerging-island case. To this end, consider the decomposition $u=\widetilde{u}+u^0$, with some $u^0 \in W$ such that $u^0|_{t=t_0}=u_0$.

For example, one may set $u^0=\eta\, \hat u^0\circ(\Phi_{-})^{-1}$ for $t\le t_c/2$ and $u^0=0$ for $t> t_c/2$. Here $\Phi_{-}$ is the diffeomorphism from \eqref{Diff} and $\hat u^0$ is the solution of the heat equation in the cylindrical domain $(t_0,T)\times\Omega(t_0)$ with homogeneous Dirichlet boundary condition and initial value $u_0$. Hence,   $u^0 \in L^2([t_0,T];H^1_0(\Omega(t_0)))$ and $u^0_t \in L^2([t_0,T];H^{-1}(\Omega(t_0)))$ holds.  The function $\eta$ is a cut off function such that $\eta\in C^\infty(t_0,t_c/2)$ and satisfies $\operatorname{supp}(\eta)\subset[0,t_c/2)$ and $\eta(t_0)=1$. 

Consider the closed subspace
\[
W_0 := \{\, v \in W \mid v(\cdot,t_0)=0 \text{ on } \Omega(t_0) \,\}.
\]
For the emerging-island case we take $W_0=W$. The space $W_0$ is well-defined, since functions in $W$ possess well-defined traces on $\Omega(t)$ for any $t\in[t_0,T]$, see Lemma~\ref{L4}.

Then $\widetilde u \in W_0$ satisfies \eqref{week1} with modified right-hand side $\widetilde f = f - (u^0)_t + \Delta u^0 \in H_0^{-1}$.
We therefore take $W_0$ as the solution space. We allow for the more general linear parabolic problem:

Given $f\in H^{-1}$, find $u \in W_0$ such that
\begin{equation}\label{weakformu}
	\langle u_t,v\rangle + a(u,v) = \langle f,v\rangle
	\qquad \forall\, v \in H_0,
\end{equation}
where $a(\cdot,\cdot)$ is a continuous bilinear form on $H_0\times H_0$, i.e, $|a(u,v)|\leq \Gamma_a \|u\|_H \|v\|_H$ for all $u,v \in H_0$, that also satisfies the G{\aa}rding-type condition:
for any $g\in C([t_0,T])$ with $g>0$ on $[t_0,T]$,
\begin{equation}\label{Garding}
	a(u,gu)\ge c_0\|u\|_H^2 - c_1 (u,gu)_\cQ
	\qquad \forall\, u\in H_0,
\end{equation}
with $c_0=c_0(g)>0$ independent of $u$, and $c_1\ge0$ independent of $u$ and $g$.

\begin{example}\rm
	For the heat equation, \eqref{Garding} holds with $c_1=0$ and $c_0=(C_P^2+1)^{-1} \min_{[t_0,T]} g$, where $C_P>0$ is the constant from the uniform Poincaré inequality \eqref{UPoincare}.
	
Another example is the advection–diffusion equation
\begin{equation} \label{transport1}
	\frac{\partial u}{\partial t} + \operatorname{div}(u\bw) - \Delta u = f
	\quad \text{on } \Omega(t), \; t\in(t_0,T],
\end{equation}
supplemented with \eqref{Dirichlet} and an initial condition.
The G{\aa}rding condition holds if $\bw$ is such that
\begin{equation}\label{essinf}	
	\mbox{essinf}_\cQ\operatorname{div}\bw>-\infty
\end{equation}
	 holds.
	To verify $H$-continuity of $a(\cdot,\cdot)$, let $d(x,t)=\operatorname{dist}(x,\partial\Omega(t))$ and assume that the classical Hardy inequality on $H_0^1(\Omega(t))$ holds uniformly in time. Then
	\[
	|(\operatorname{div}(u\bw),v)_\cQ| =	|(u\bw,\nabla v)_\cQ|
	\le
	\|v\|_H\|u\bw\|_\cQ
	\le C\|v\|_H\|u\|_H\|d\bw\|_{L^\infty(\cQ)}.
	\]
	Hence $a(\cdot,\cdot)$ is continuous if $|d\bw|$ is essentially bounded in $\cQ$. We note that for the domain velocity $\bw=V$ defined in \eqref{trajectories} the latter condition is satisfied, while \eqref{essinf} holds for the cases 2a), 3a) if $\phi_t(0,0)<0$ and 3d) with $\phi_t(0,0)>0$. 
\end{example}
\smallskip

In the remainder of this section we show that the variational problem \eqref{weakformu} is well-posed.
Our analysis is based on the continuity and inf-sup conditions, cf.~\cite{Ern04}. %The arguments are standard, but we include them for completeness.

The continuity property is straightforward:
	\begin{equation}\label{contin}
| \langle u_t,v\rangle + a(u,v)| \le C_a \|u\|_W \|v\|_H \quad \text{for all}~~u \in W,~v \in H_0, \quad C_a:=\sqrt{1+\Gamma_a^2}.
	\end{equation}

The next two lemmas prove the necessary inf-sup conditions.

\begin{lemma}\label{la:infsup}
	The inf-sup inequality
	\begin{equation}\label{infsup}
		\inf_{0\neq u \in  W_0 }~\sup_{ 0\neq v \in {H_0}} \frac{\langle u_t,v\rangle + a(u,v)}{\|u\|_W\|v\|_H} \geq c_s
	\end{equation}
	holds with some $c_s>0$.
\end{lemma}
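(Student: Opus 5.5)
Given $u\in W_0$, $u\neq 0$, I will construct a test function $v\in H_0$ in two pieces: one controlling $\|u\|_H$ and one controlling $\|u_t\|_{H^{-1}}$. The argument follows the standard one for cylindrical domains (e.g.\ \cite{Ern04}). The only non-standard ingredients are the integration by parts identity \eqref{partint} and the Gårding condition \eqref{Garding}, both available from the preceding sections.

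\textbf{Step 1 (control of $\|u\|_H$).} Take $g(t)=e^{-2c_1(t-t_0)}$, which is positive and continuous on $[t_0,T]$, and set $v_1=gu$. Since $g$ depends only on $t$ and is smooth, $gu\in W_0\subset H_0$ and $\|gu\|_H\le \|u\|_H$. Apply \eqref{partint} with $u$ and $gu$, both in $W$ and both with vanishing trace at $t_0$. This gives
\[
2\langle u_t,gu\rangle=\int_{\Omega(T)}g(T)u(T)^2\,dx-\int_\cQ g' u^2\,dq .
\]
Here I use $(gu)_t=g'u+gu_t$, which I would justify via smooth approximants from Lemma~\ref{L3}. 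In the vanishing-island case the $\Omega(T)$ term is absent. Since $-g'=2c_1g$, we get $\langle u_t,gu\rangle\ge c_1(u,gu)_\cQ$. Combining with \eqref{Garding},
\[
\langle u_t,gu\rangle+a(u,gu)\ge c_0(g)\|u\|_H^2 .
\]

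\textbf{Step 2 (control of $\|u_t\|_{H^{-1}}$).} By Riesz representation in the Hilbert space $H_0$ (the functional $u_t$ extends boundedly to $H_0$ by density, Theorem~\ref{densityH0}), there is $z\in H_0$ with $(z,w)_H=\langle u_t,w\rangle$ for all $w\in H_0$ and $\|z\|_H=\|u_t\|_{H^{-1}}$. Then
\[
\langle u_t,z\rangle+a(u,z)\ge \|u_t\|_{H^{-1}}^2-\Gamma_a\|u\|_H\|u_t\|_{H^{-1}}\ge \tfrac12\|u_t\|_{H^{-1}}^2-\tfrac{\Gamma_a^2}{2}\|u\|_H^2 .
\]

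\textbf{Step 3 (combination).} Set $v=gu+\lambda z$ with $\lambda=c_0/\Gamma_a^2$ (any $\lambda>0$ if $\Gamma_a=0$). Adding the two bounds gives
\[
\langle u_t,v\rangle+a(u,v)\ge \tfrac{c_0}{2}\|u\|_H^2+\tfrac{\lambda}{2}\|u_t\|_{H^{-1}}^2\ge c\|u\|_W^2 .
\]
On the other hand $\|v\|_H\le \|u\|_H+\lambda\|u_t\|_{H^{-1}}\le (1+\lambda)\|u\|_W$. Dividing yields \eqref{infsup} with $c_s=c/(1+\lambda)$.

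\textbf{Main obstacle.} The delicate point is Step 1. One has to make sure that \eqref{partint} holds for the pair $(u,gu)$ and that the product rule $(gu)_t=g'u+gu_t$ holds in $H_0^{-1}$. The product rule follows directly from the distributional definition, since $g$ is smooth in $t$. The integration by parts relies on the density of smooth functions in $W$ (Lemma~\ref{L3}) and on the trace bound from Lemma~\ref{L4}. This is exactly where the excluded cases 2c) and 3d) would break the argument. Once these are granted, the sign of the boundary term at $T$ is favorable and the term at $t_0$ vanishes because $u\in W_0$.
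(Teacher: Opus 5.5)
Your proof is correct and follows the paper's argument essentially step by step: the weighted test function $e^{-2c_1(t-t_0)}u$, combined with \eqref{partint} and the G{\aa}rding condition, controls $\|u\|_H$; the Riesz representative $z$ of $u_t$ controls $\|u_t\|_{H^{-1}}$; and a suitable linear combination of the two gives \eqref{infsup}. Your normalization $g(t)=e^{-2c_1(t-t_0)}\le 1$ is in fact slightly cleaner than the paper's $e^{-\gamma t}$, because it makes the bound $\|gu\|_H\le\|u\|_H$ used to estimate $\|v\|_H$ literally true; with $t_0=-1$ the paper's version needs an extra factor $e^{\gamma}$ there, which only changes the constant.
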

\begin{proof} 
	Take $u\in  W_0 $ and let  $u_\gamma=e^{-\gamma t}u\in H_0$, with some $\gamma>0$.	From \eqref{partint} and  $u(t_0)=0$, we infer
	\[
	\langle  u_t, u_\gamma \rangle= \tfrac12\left(\langle  (u_\gamma)_t, u \rangle+ \langle  u_t, u_\gamma \rangle+ \gamma(u,u_\gamma)_Q\right) \ge \tfrac\gamma2 (u,u_\gamma)_Q
	.\]
	From this and the   G{\aa}rding condition we get 
	\begin{equation}\label{eq:proofinfsup1}
		\la u_t, u_\gamma\ra +  a(u, u_\gamma) \ge  c_0\|u\|_H^2,\quad  c_0=c_0(\gamma)>0,
	\end{equation}
	if $\gamma=2c_1$.
	This establishes the control of $\|u\|_H$ on the right-hand side of the inf-sup inequality. We also need control of $\| u_t\|_{H^{-1}}$ to bound the full norm $\|u\|_W$.  
	
	By Riesz' representation theorem, there is a unique $z\in H_0$ such that $\la u_t,v\rangle = (z, v)_H$ for all $v\in H_0$, and $\|z\|_H = \| u_t\|_{H^{-1}}$ holds. Thus we obtain
	\[
	\la u_t,z\rangle = (z,z)_H = \| u_t\|_{H^{-1}}^2.
	\]
	Therefore,  we get
	\begin{equation}\label{eq:proofinfsup2}
			\langle  u_t,z\rangle +  a(u, z)  = \|z\|_H^2 +  a(u,z) \ge \|z\|_H^2 - \tfrac12 C_a^2 \|u\|_H^2 - \tfrac12\|z\|_H^2 
			 = \tfrac12\| u_t\|_{H^{-1}}^2 -  \tfrac12 C_a^2\|u\|_H^2.
	\end{equation}
	This establishes control of $\|u_t\|_{H^{-1}}$ at the expense of the $H$-norm, which is controlled in \eqref{eq:proofinfsup1}.
	Letting $v= z + \mu u_\gamma\in H_0$ with some sufficiently large parameter $\mu \geq 1$, we have the estimate
	\begin{equation}\label{aux20}
		\|v\|_H\le \|z\|_H + \mu\|u_\gamma\|_H \le \| u_t\|_{H^{-1}} + \mu \|u\|_H\le \mu \sqrt{2}\|u\|_W.
	\end{equation}
	Taking $\mu:= \frac{1+C_a^2}{2 c_0}$,  we conclude from  \eqref{eq:proofinfsup1}, \eqref{eq:proofinfsup2} and \eqref{aux20} that
	\[
	\la u_t,v\rangle +  a(u,v) \ge \tfrac12\|u_t\|_{H^{-1}}^2 +(\mu  c_0 - \tfrac12 C_a^2)\|u\|_H^2= \tfrac{1}{2}\|u\|_W^2 \geq \tfrac{1}{2 \sqrt{2}} \tfrac{1}{\mu} \|u\|_W\|v\|_H.
	\]
	This completes the proof.
\end{proof}
\smallskip

\begin{lemma} \label{la:leminj}
	If $\langle u_t,v\rangle + a(u,v)=0$  for some $v\in H_0$ and all $u \in  W_0 $, then $v=0$.
\end{lemma}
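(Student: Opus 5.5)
The plan is to show that such a $v$ automatically lies in $W$ and solves a backward (adjoint) problem with zero final data. An energy argument with an exponential weight, driven by the G{\aa}rding condition \eqref{Garding}, will then force $v=0$.

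\textbf{Step 1: $v\in W$ and the adjoint equation.} Since $C_c^\infty(\cQ)\subset W_0$, take $u=\xi\in C_c^\infty(\cQ)$. Then $\langle \xi_t,v\rangle=\int_\cQ \xi_t v\,dq=-\langle \partial_t v,\xi\rangle$ in the distributional sense, so the hypothesis gives
\[
\langle \partial_t v,\xi\rangle = a(\xi,v), \qquad |a(\xi,v)|\le \Gamma_a\|\xi\|_H\|v\|_H .
\]
Hence $\|\partial_t v\|_{H^{-1}}<\infty$, so $v\in W$. By density of $C_c^\infty(\cQ)$ in $H_0$ (Theorem~\ref{densityH0}) the identity extends to
\begin{equation*}
\langle v_t,w\rangle = a(w,v)\qquad \text{for all } w\in H_0 .
\end{equation*}

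\textbf{Step 2: the final trace vanishes.} Take a general $u\in W_0$. Then $u,v\in W$, and the integration by parts formula \eqref{partint} together with $u(t_0)=0$ gives $\langle u_t,v\rangle=-\langle v_t,u\rangle+\int_{\Omega(T)}u(T)v(T)\,dx$. Inserting this and Step 1 into the hypothesis yields $\int_{\Omega(T)}u(T)v(T)\,dx=0$ for all $u\in W_0$.

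To conclude $v(T)=0$ we need the final traces of $W_0$ to be dense in $L^2(\Omega(T))$. We produce them as follows. Take $\chi\in C^\infty_c(\Omega(T))$ and transport it with the diffeomorphism $\Phi_+$ from \eqref{Diff}, i.e.\ form $\chi\circ\Phi_+(t)^{-1}$. Multiply by a smooth time cutoff that equals $1$ near $T$ and vanishes for $t\le T-\tfrac12$. The result is smooth, vanishes on $\GQ$ and near $t_0$, and so lies in $W_0$ with final trace $\chi$.

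This step, namely verifying that these functions are genuinely in $W_0$ and that the traces of Lemma~\ref{L4} coincide with pointwise restrictions, is the main technical point I expect. In the vanishing-island case there is no $\Omega(T)$ term and Step 2 is void.

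\textbf{Step 3: weighted energy argument.} Let $g(t)=e^{\gamma t}$ with $\gamma=2c_1$, and put $w=gv\in W\subset H_0$ (multiplication by a smooth function of $t$ preserves $W$).

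Applying \eqref{partint} to the pair $v$, $gv$ gives $2\langle v_t,gv\rangle+(g'v,v)_\cQ=\int_{\Omega(T)}g\,v(T)^2-\int_{\Omega(t_0)}g\,v(t_0)^2\le 0$, using $v(T)=0$. Hence
\[
\langle v_t,gv\rangle\le -\tfrac{\gamma}{2}(v,gv)_\cQ .
\]
On the other hand, Step 1 gives $\langle v_t,gv\rangle=a(gv,v)=a(\tilde u,g^{-1}\tilde u)$ with $\tilde u:=gv\in H_0$. Since $g^{-1}$ is continuous and positive on $[t_0,T]$, \eqref{Garding} yields
\[
a(\tilde u,g^{-1}\tilde u)\ge c_0\|gv\|_H^2-c_1(gv,v)_\cQ .
\]
Combining the two bounds gives $c_0\|gv\|_H^2\le (c_1-\tfrac{\gamma}{2})(gv,v)_\cQ=0$. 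Therefore $gv=0$, and hence $v=0$.
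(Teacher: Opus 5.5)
Your proof is correct and follows the paper's argument essentially step for step. You show $v\in W$ with $\langle v_t,w\rangle=a(w,v)$ for all $w\in H_0$, use \eqref{partint} to obtain $v(\cdot,T)=0$, and then test with $e^{2c_1 t}v$ (this is the paper's $v_\gamma$ with $\gamma=-2c_1$), applying the G{\aa}rding condition with weight $e^{-2c_1 t}$. Your explicit construction of functions in $W_0$ with prescribed final trace $\chi\in C_c^\infty(\Omega(T))$ supplies a detail the paper leaves implicit when it writes ``This implies that $v(\cdot,T)=0$''.
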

\begin{proof}
	Assume there is $v \in H_0$ such that $\la u_t,v\rangle =- a(u,v) $ for all $u \in  W_0 $. For all $ u \in C_c^1(\cQ) \subset  W_0 $ we have by definition
	\begin{align} \label{AA6}
		\langle v_t,u\rangle & = - \int_0^T \int_{\Omega(t)} v  u_t  \, dx \, dt
		= -\langle u_t,v\rangle    =  a(u,v).
	\end{align}
	Since the functional  $u \to a(u,v)$ is in $H_0^{-1}$,  we conclude $ v_t \in H_0^{-1}$, and thus $v \in W$ holds. From \eqref{AA6} and the density result in Theorem~\ref{densityH0} it follows that
	\begin{equation}\label{aux2}
	\langle v_t,u\rangle=  a(u,v)\quad \text{ for all}~~u \in H_0.
	\end{equation}		
	Combining  this with $\la u_t,v\rangle =- a(u,v) $ for all $u \in  W_0 $  and  using \eqref{partint} we obtain
	\[
	0=  \la v_t,u\rangle +\la u_t,v\ra= \int_{\Omega(T)} u(x,T) v(x,T)\, dx \quad \text{for all}~~u \in  W_0 .
	\]
	This implies that $v(\cdot,T)=0$ on $\Omega(T)$. We proceed as in the first step of the proof of Lemma \ref{la:infsup}. We take in \eqref{aux2} $u=v_\gamma= e^{-\gamma t}v$, with $\gamma=-2 c_1$ (cf. \eqref{Garding}), and use \eqref{partint} 
	and the G{\aa}rding condition to obtain
	\begin{align*}
		0  &=\la v_t,v_\gamma \ra- a(v_\gamma ,v)  = \tfrac12( \langle  v_t, v_\gamma  \rangle + \langle  (v_\gamma)_t, v\rangle + \gamma(v,v_\gamma)_Q)  - a(v_\gamma ,v)   \\
		&\leq    \tfrac\gamma2(v,v_\gamma)_Q-  a(v_\gamma ,v)\le    - c_0 \|v\|^2_H, \quad\text{with}~c_0>0.
	\end{align*}
	We conclude $v=0$.
\end{proof}
\medskip

As a direct consequence of the preceding two lemmas and \eqref{contin} we obtain the following well-posedness result.

\begin{theorem} \label{mainthm1}
	For any $f\in H^{-1}$, the problem \eqref{weakformu} has a unique solution $u\in  W_0 $. This solution satisfies the a-priori estimate
	\[
	\|u\|_W \le c_s^{-1} \|f\|_{H^{-1}}.
	\]
\end{theorem}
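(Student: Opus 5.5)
This is an application of the Babu\v{s}ka--Banach (Banach--Ne\v{c}as--Babu\v{s}ka) theorem, cf.~\cite{Ern04}, to the bilinear form
\[
b(u,v):=\langle u_t,v\rangle + a(u,v)
\]
on $W_0\times H_0$. The plan is to check the three hypotheses of that theorem in turn and then read off existence, uniqueness and the bound.

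\emph{Functional-analytic setting.} $H_0$ is a Hilbert space: it is a closed subspace of $H$, being the kernel of the continuous trace operator $H\to L^2_\omega(\GQ)$. $W$ is complete under $\|\cdot\|_W$. Indeed, if $(u_n)$ is Cauchy in $W$, then $u_n\to u$ in $H_0$ and $\partial_t u_n\to g$ in $H_0^{-1}$. Passing to the limit in $\langle\partial_t u_n,\xi\rangle=-(u_n,\partial_t\xi)_\cQ$ for $\xi\in C_c^\infty(\cQ)$, and using density (Theorem~\ref{densityH0}), gives $\partial_t u=g$. Next, $W_0$ is closed in $W$, because the trace map $u\mapsto u(\cdot,t_0)$ is continuous by Lemma~\ref{L4}, estimate \eqref{AMest}. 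In the emerging-island case $W_0=W$, so there is nothing to check. Hence $W_0$ is a Banach (indeed Hilbert) space and $H_0$ is reflexive.

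\emph{Hypotheses.} Continuity of $b$ is exactly \eqref{contin}. The inf-sup condition on $W_0\times H_0$ with constant $c_s$ is Lemma~\ref{la:infsup}. Non-degeneracy in the second argument, namely that $b(u,v)=0$ for all $u\in W_0$ forces $v=0$, is Lemma~\ref{la:leminj}. The right-hand side $v\mapsto\langle f,v\rangle$ is a bounded functional on $H_0$ with norm $\|f\|_{H^{-1}}$.

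\emph{Conclusion.} The Babu\v{s}ka--Banach theorem then makes the operator $B:W_0\to H_0^{-1}$, $\langle Bu,v\rangle=b(u,v)$, an isomorphism. It is injective with closed range by the inf-sup bound, and its range is dense, hence all of $H_0^{-1}$, by Lemma~\ref{la:leminj} and reflexivity. This gives a unique $u\in W_0$ solving \eqref{weakformu}. The a priori estimate follows from the inf-sup inequality:
\[
c_s\|u\|_W\le \sup_{v\in H_0}\frac{b(u,v)}{\|v\|_H}=\sup_{v\in H_0}\frac{\langle f,v\rangle}{\|v\|_H}=\|f\|_{H^{-1}}.
\]

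Since the heavy lifting was done in Lemmas~\ref{la:infsup} and \ref{la:leminj}, the only point needing care is the completeness of $W$ and $W_0$. That is where the identification of the limit of $\partial_t u_n$ through $C_c^\infty(\cQ)$ test functions, and the continuity of the time trace from Lemma~\ref{L4}, are used.
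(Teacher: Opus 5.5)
Your proposal is correct and follows the paper's route. The paper obtains the theorem as a direct consequence of the continuity bound \eqref{contin}, the inf-sup estimate of Lemma~\ref{la:infsup} and the injectivity statement of Lemma~\ref{la:leminj}, via the Babu\v{s}ka--Banach theorem. Your extra checks (completeness of $W$, closedness of $W_0$ via the trace estimate of Lemma~\ref{L4}, reflexivity of $H_0$) are hypotheses the paper uses only implicitly, and you verify them correctly.
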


The theorem establishes well-posedness and an a priori estimate for the weak solution in the sense of \eqref{weakformu}. Since compactly supported functions belong to $H_0$, the weak solution is also a distributional solution of the parabolic problem \eqref{transport}. Conversely, by the density result, any smooth (or merely distributional) solution of \eqref{transport} necessarily satisfies \eqref{weakformu}.

%==============================================================================
\bibliographystyle{siam}
\bibliography{literatur}{}
% ==============================================================================

\appendix

\section{Proof of Theorem~\ref{densityH}} \label{AppA} It turns out that the arguments used in \cite{Adams2003} for proving density of $C^\infty(\overline \Omega)$ in the standard Sobolev space $W^{m,p}(\Omega)$ also apply to the anisotropic Sobolov space $H$, where only weak partial derivates in certain coordinate directions are required to exist. We explain this in more detail. The numbering we use below, for example labeling of Lemmas or formula,  refers to \cite{Adams2003}. Instead of the notation $D^\alpha$ used for a general (maybe higher order) partial derivative used in \cite{Adams2003} we use the special case $\partial_i$ to denote a first order partial derivative with respect to $x_i$. Density of $C^\infty(\overline \Omega)$ in $W^{m,p}(\Omega)$ is proved in Theorem 3.22. It is assumed that the domain $\Omega$ satisfies the segment condition. The space-time domain $\cQ \in \R^{d+1}$ satisfies this condition. We explain the main ingredients used in the proof of this theorem and how these also apply to the anisotropic space $H$. It is convenient to use the notation $H=H(Q)$. \\
A standard mollifier $J_\epsilon$ is defined in 2.28. Let $\cO$ be a collection of open sets in $\R^{d+1}$ that cover $\cQ$. A $C^\infty$-partition of unity $\Psi$ for $\cQ$ subordinate to $\cO$ is defined in Theorem 3.15. 
\\
{\sc Lemma 3.16 (modified).} If $\cQ'$ is a subdomain with compact closure in $\cQ$, then $\lim\limits_{\epsilon \to 0} J_\epsilon \ast u= u$ in $H(\cQ')$.
\begin{proof} The arguments used in \cite{Adams2003} are applicable without modifications.
\end{proof}
\\
{\sc Theorem 3.22 (modified)}. The space $C^\infty(\overline \cQ)$ is dense in $H(\cQ)$.
\begin{proof}
We follow the arguments used in \cite{Adams2003}. The first part of the proof in \cite{Adams2003} is superfluous because the domain $\cQ$ is bounded. Let $u \in H(\cQ)$ be given. We extend $u$ by zero outside $\cQ$. Define $K:=\{\, y \in \cQ~|~u(y)\neq 0\,\}$, $F:=\overline{K}\setminus \ \left( \cup_{y \in \partial \cQ} U_y\right)$, where $\{U_y\}$ is the collection of open sets introduced in the definition of the segment condition.
There exists an open subset $U_0$ such that $F \subset \subset U_0 \subset \subset \cQ$. Furthermore $\overline{K} \subset U_0 \cup U_1 \cup \ldots \cup U_k$, where $(U_j)_{1 \leq j \leq k}$ is a subset of the collection $\{U_y\}$. Moreover, there are other open sets $V_0,V_1, \ldots, V_k$ such that $V_j \subset \subset U_j$ for $0 \leq j \leq k$ but still $\overline{K} \subset V_0 \cup V_1 \cup \ldots \cup V_k$. Let $\Psi$ be a $C^\infty$-partition of unity  subordinate to  $\{\, V_j~|~0\leq j \leq k\,\}$, and let $\psi_j$ be the sum of the finitely many functions $\psi \in \Psi$ whose supports lie in $V_j$. Define $u_j:=\psi_ju$. Suppose that for each $j$ we can find $\phi_j \in C_0^\infty(\R^{d+1})$ such that
\begin{equation} \label{eq7}
  \|u_j -\phi_j\|_H= \left(\|u_j-\phi_j\|_\cQ^2 + \sum_{i=1}^d\|\partial_i u_j -\partial_i \phi_j\|_{\cQ}^2\right)^\frac12 < \epsilon/(k+1).
\end{equation}
Then with $\phi:=\sum_{j=0}^k \phi_j \in C_0^\infty(\R^{d+1})$ we obtain
\[
 \|u- \phi\|_H \leq \sum_{j=0}^k\|u_j-\phi_j\|_H < \epsilon,
\]
which proves the claim. It remains to verify \eqref{eq7}. A function $\phi_0 \in C_0^\infty(\R^{d+1})$ satisfying \eqref{eq7} for $j=0$ can be found using Lemma~3.16 (modified) since ${\rm supp}(u_0) \subset V_0 \subset \subset \cQ$. Now consider $1 \leq j \leq k$. Note that $u_j \in H(\R^{d+1} \setminus \Gamma)$ with $\Gamma=\overline{V}_j\cap \partial \cQ$. Let $z\in \R^{d+1}$ be the vector associated with $U_j$ in the definition of the segment condition, and define
 for $\tau \geq 0$, $\Gamma_\tau:=\{\, y-\tau z~|~y \in \Gamma\,\}$. For $\tau$ sufficiently small we have $\Gamma_\tau \subset U_j$  and $\Gamma_\tau \cap \overline{\cQ} = \emptyset$. For $u_{j,\tau}(y):=u_j(y+\tau z)$ we then have $u_{j,\tau} \in H(\R^{d+1} \setminus \Gamma_\tau)$. Translation is continuous in $L^2(\cQ)$ so $\partial_i u_{j,\tau} \to \partial_i u_j$ in $L^2(\cQ)$ as $\tau \to 0$ for $1 \leq i \leq d$. Thus $u_{j,\tau} \to u_j$ in $H(\cQ)$ as $\tau \to 0$. Hence, it is sufficient to find $\phi_j \in C_0^\infty(\R^{d+1})$ such that $\|u_{j,\tau}-\phi_j\|_H$ is sufficiently small. Note that $(\cQ \cap U_j) \subset \subset \R^{d+1} \setminus \Gamma_\tau$, and thus by Lemma 3.16 (modified) we can take $\phi_j = J_\delta \ast u_{j,\tau}$ for suitably small $\delta > 0$.  
\end{proof}

\section{Proof of Lemma~\ref{lemA}} \label{AppB} We  consider the case of domain splitting in three dimensions ($d=3$): $\phi(x,t)=-x_1^2 +x_2^2 +x_3^2+t$. The case of domain merging is then also covered by replacing $t$ by $-t$. Note that  $-3 \eps \leq \phi(x,t)\leq 0$ iff $x_2^2+x_3^2 +t \leq x_1^2 \leq x_2^2+x_3^2 +t  + 3 \eps$. We use polar coordinates $(\psi,r)$ in the $(x_2,x_3)$-plane.
First consider $t \geq 0$. Then we have 
\[ \|(\nabla\theta_\eps)u\|_{\cQ_\eps^{c} \cap (t \geq 0)}^2  \leq c \eps^{-2}\int_0^\delta \int_0^{2\pi} \int_{0}^\delta \int_{\sqrt{r^2 +t}}^{\sqrt{r^2 +t+3 \eps}} u^2 (x_1^2+ r^2)\, dx_1 r \, dr  \, d\psi \, dt.
\]
The inner integral can be estimated  as in \eqref{HH1}, which results in the desired estimate \eqref{E6}. For the case $t\leq 0$ we note $-3 \eps \leq \phi(x,t)\leq 0$ iff $x_1^2-t -3 \eps \leq x_2^2 +x_3^2 \leq x_1^2-t$. We  obtain,  similar to \eqref{HH2},
\begin{equation} \label{ApHH2}
 \|(\nabla\theta_\eps)u\|_{\cQ_\eps^{c} \cap (t \leq 0)}^2  \leq c \eps^{-2}\int_{-\delta}^0 \int_{-\delta}^\delta \int_0^{2 \pi}\int_{[x_1^2 -t-3\eps]_+^\frac12}^{\sqrt{x_1^2 -t}} u^2 (x_1^2+ r^2) r \, dr \, d \psi \, dx_1 \, dt.
\end{equation}
For the inner integral we obtain, by using the same estimates as in \eqref{HH3}-\eqref{HH4} and the Hardy inequality \eqref{ResP1}:
\begin{align*}
 \eps^{-2}\int_{[x_1^2 -t-3\eps]_+^\frac12}^{\sqrt{x_1^2 -t}} u^2 (x_1^2+ r^2) r \, dr  & \leq c \int_{[x_1^2 -t-3\eps]_+^\frac12}^{\sqrt{x_1^2 -t}} \left(\frac{u}{\sqrt{x_1^2 -t} - r}\right)^2 r  \, dr   \\ & \leq c \int_{[x_1^2 -t-3\eps]_+^\frac12}^{\sqrt{x_1^2 -t}} \left(\frac{\partial u}{\partial r}\right)^2 r  \, dr,
\end{align*}
and using this in \eqref{ApHH2} yields the desired estimate \eqref{E6}. \\
We consider the case 3c) of the creation of a hole through the domain (the case of a vanishing hole follows from this by replacing $t$ by $-t$): $\phi(x,t)=-x_1^2 -x_2^2 +x_3^2+t$. If we use polar coordinates $(\psi,r)$ in the $(x_1,x_2)$-plane then  $-3 \eps \leq \phi(x,t)\leq 0$ iff $r^2-t -3 \eps \leq x_3^2 \leq r^2-t$. First consider $t \geq 0$. Then we have 
\begin{equation} \label{P0} \|(\nabla\theta_\eps)u\|_{\cQ_\eps^{c} \cap (t \geq 0)}^2  \leq c \eps^{-2}\int_0^\delta \int_0^{2\pi} \int_{\sqrt{t}}^\delta \int_{[r^2-t -3\eps]_+^\frac12}^{\sqrt{r^2 -t}} u^2 (x_3^2+ r^2)\, dx_3 r \, dr  \, d\psi \, dt.
\end{equation}
We use the same estimates as in \eqref{HH4}, which yields $(\sqrt{r^2-t}-x_3)^2(x_3^2+ r^2) \leq 18 \eps^2$ for 
$x_3$ with $[r^2-t -3\eps]_+^\frac12 \leq x_3 \leq \sqrt{r^2 -t}$. Using this in \eqref{P0} and applying the Hardy inequality \eqref{ResP1} yields the estimate \eqref{E6}.  The case $t \leq 0$ can be treated very similarly. 
\end{document}